\newtheorem{theorem}{\bf{Theorem}}[section]
\newtheorem{lemma}[theorem]{\bf{Lemma}}
\newtheorem{corollary}[theorem]{\bf{Corollary}}
\theoremstyle{definition}
\numberwithin{equation}{section}
\newcommand{\ds}{\displaystyle}
\begin{document}
\title[  Rigidity Theorems for Anosov Geodesic Flows]{Some Rigidity Theorems for Anosov Geodesic Flows  in Manifolds of Finite Volume }
  

\author{\'Italo Melo}
\address{Departamento de
	Matem\'{a}tica-UFPI, Ininga, 
	64049-550 Teresina, Brazil.}
\thanks{$^{2}$ Partially supported by CAPES-BR}
\email{italodowell@ufpi.edu.br}
\ \\
\indent\author[Sergio Roma\~na]{Sergio Roma\~na}
\indent \address{Instituto de Matemática, Universidade Federal do Rio de Janeiro, CEP 21941-909, Rio de Janeiro, Brazil and  Department of Mathematics, Southern University of Science and Technology, Shenzhen, China.}
\thanks{$^{1}$ CNPq and Bolsa Jovem Cientista do Nosso Estado No. E-26/201.432/2022}
\email{sergiori@im.ufrj.br}
\begin{abstract}
In this paper, we prove that if the geodesic flow of a complete manifold  without conjugate points with sectional curvatures bounded below 
by $-c^2$ is of Anosov type, then the constant of contraction of the flow is $\geq e^{-c}$. Moreover, if $M$ has a finite volume,
the equality holds if and only if the sectional curvature is constant. We also { apply} this result {to get} a certain rigidity for bi-Lipschitz, and consequently, for $C^1$-conjugacy between two geodesic flows. 
\end{abstract}
\maketitle
\section{Introduction and Main Results}
Geodesic flows appear naturally when we have a Riemannian metric on a complete manifold. Its properties are closely linked with the geometry of 
the Riemannian metric.\\
\indent From Hadamard's work on cutting sequences and Hopf's work, which proved the ergodicity of geodesic flows on surfaces of negative curvature \cite{hopf:hopf:0}, we know that geometry influences the chaotic properties of geodesic flow. This property was further explored by Anosov in his seminal work \cite {anosov:anosov}, where he proved that geodesic flows of negative curvature manifolds are uniformly hyperbolic systems. Currently, we simply call them “\emph{Anosov}” systems. The dynamic properties of the geodetic flow of negative curvature manifolds have been studied for many decades. To date, the study of the ergodic and dynamic properties of geodesic flow is a very active topic. In \cite{anosov:anosov}, Anosov began the task of knowing which types of geometries exhibit the Anosov property.


\indent In \cite{eb:eb}, Eberlein determined geometric conditions that are equivalent to the Anosov pro\-per\-ty of the geodesic flow when $M$ is
compact or  compactly homogeneous \emph{i.e.}, the isometry group of its universal cover acts co-compactly. More specifically, Eberlein's
conditions give information related to the divergence of Jacobi fields or about the geodesics passing  through points of negative sectional 
curvature. In \cite{bol:bol}, Bolton observed that the condition of compactly homogeneous by Eberlein at \cite{eb:eb} is unnecessary. 

In the opposite direction, we can ask the following question: What geometric properties impose the geodesic flow to be Anosov?\\
\indent  A satisfactory answer to this question was given by Klingenberg, who proved  that a compact Riemannian manifold with the geodesic
flow of Anosov type has no conjugate points (cf. \cite{kli:kli}). This result was generalized by Ma\~n\'e in \cite{mane:mane} for the case of finite volume.   Recently, the infinite volume case was proved by the authors in \cite{RomanaMelo3}.

 \indent The results of Klingenberg at \cite{kli:kli}, Ma\~n\'e at  \cite{mane:mane}, and Melo-Roma\~na at \cite{RomanaMelo3} show that some geometric properties of manifolds with Anosov geodesic flow are consequences of dynamical properties. In the same spirit, in this paper, we will show some results which give an important relation between the geometry 
and dynamical behavior of the geodesic flow. Moreover, we will obtain that a  dynamical rigidity implies geometric rigidity (cf. Theorem \ref{curvature}), which will be used to show rigidity in the level of conjugacy  between two geodesic flows (cf. Theorem \ref{R-conjugation} and Theorem \ref{R-conjugation-WCP}).

 To state our results, we start with the precise definition of Anosov geodesic flow for general case, even non-compact manifold:
 Let $M$ be a complete Riemannian manifold and $SM$ the unitary tangent bundle, endowed with the \emph{Sakaki} metric (see Section \ref{GeoFlow}). 	Let $\phi^{{t}}_{_{M}}:SM \rightarrow SM$ be the geodesic flow, and 
suppose that $\phi^t_{_{M}}$ is \emph{Anosov}, this means that  the tangent bundle of $SM$, $T(SM)$,  has a splitting $T(SM) = E^s \oplus \langle G \rangle \oplus E^u $ such that 
\begin{eqnarray*}
	d(\phi^t_{_{M}})_{\theta} (E^s(\theta)) &=& E^s(\phi_{_{M}}^t(\theta)),\\
	d(\phi^t_{_{M}})_{\theta} (E^u(\theta)) &=& E^u(\phi_{_{M}}^t(\theta)),\\
	||d(\phi^t_{_{M}})_{\theta}\big{|}_{E^s}|| &\leq& C \lambda^{t},\\
	||d(\phi^{-t}_{_{M}})_{\theta}\big{|}_{E^u}|| &\leq& C \lambda^{t},\\
\end{eqnarray*}
for all $t\geq 0$ with $ C > 0$ and $0 < \lambda <1$, where the constant $\lambda$ is called a \emph{constant of contraction} of the flow and $G$ is the geodesic vector field. In our first result, we give a lower bound for the possible values of $\lambda$ depending on the lower bound of the curvature. Furthermore, we prove that if $\lambda$ reaches its minimum value, then we have a rigidity of the geometry. More specifically
\begin{theorem}\label{curvature}
	Let $M$ be a complete Riemannian manifold with sectional curvature bounded below by $-c^2$ and Anosov geodesic flow  $\phi^{t}_{_M}$.
	\begin{itemize} 
	\item[(a)] Any constant of contraction $\lambda$ of $\phi^{t}_{_M}$ satisfies $\lambda \geq e^{-c}$.
	\item[(b)] If $M$ has finite volume, then $\lambda=e^{-c}$  if and only if the sectional curvature of $M$ is constant, equal to $-c^2$. 
\end{itemize}	
\end{theorem}
The first part of Theorem \ref{curvature} implies that for  Anosov geodesic flows the dynamics is controlled, to some extent, by the curvature. 
The second part is more general: it says that rigidity in the dynamical sense provides rigidity in the geometrical sense. 
We emphasize that our result does not require the compactness of the manifold.

It is worth noting that the first part of Theorem \ref{curvature} follows from the proof of the corollary of Theorem A by Ma\~ne  in  
\cite{mane:mane}, this result can also be proved using the Rauch comparison theorem, for more details see \cite{handbook:handbook}. However, Ma\~ne's proof and the approach using the Rauch comparison theorem do not provide any information concerning rigidity.
Here we will give a different proof of this theorem in which, if the equality holds, the additional information obtained in the proof provides a new understanding of the rigidity problem.\\
\indent Another important thing is to observe that in the compact case, the rigidity of Theorem \ref{curvature}  can be proved using  Pesin's formula for the entropy and Lyapunov exponents and some techniques of \cite{mane:freire}, but it is  worth noting that, in general,  Pesin's formula  is not valid  in non-compact manifolds (cf. \cite{fel:riq}), so as some theorems in \cite{mane:freire}. As we are interested also in the non-compact case, our proof does not use Pesin's formula.

\indent In the proof of the first part of Theorem \ref{curvature}, the hypothesis of finite volume is required only to ensure the non-existence of conjugate points (see \cite{mane:mane}). Therefore, when the volume is infinite and without conjugate points, the first part of this theorem is also valid, see Lemma \ref{inequality curvature}. It is worth mentioning that the condition of no conjugate points is redundant due to the authors' recent preprint (see \cite{RomanaMelo3}). 


\indent To state the other results concerning the regularity of conjugacy between two geodesic flows, we need the following definition.

\indent We say that the two flows $\varphi^t \colon N\to N$ and $\eta^t\colon S \to S$ are \emph{equivalents} if there is a continuous map $f\colon N\to S$ such that $f\circ \varphi^t=\eta^t\circ f$. The map $f$ is called an \emph{equivalence}. Equivalences between two systems are very important in  dynamical systems because  relevant information about one system is transferred to another by the equivalences.

When an equivalence is a homeomorphism it is called a \emph{conjugacy}. 
An equivalence (conjugacy) $f$ is called $1$-equivalence ($1$-conjugacy) if $f$ is bi-lipschitz {\footnote{We can think in $\alpha$- equivalence ($\alpha$-conjugacy), when there are two constants $C^1$ and $C^2$ such that $C_1\cdot d(x,y)^{\alpha}\leq D(f(x),f(y))\leq C_2\cdot d(x,y)^{\alpha}$, but such definition do not make any sense for $C^1$ flows.}, \emph{i.e.}, there are two constants $C_1$ and $C_2$ such that 
\begin{equation}\label{e1111}
C_1\cdot d(x,y)\leq D(f(x),f(y))\leq C_2\cdot d(x,y),
\end{equation}
where $d$ and $D$ are the distances in $N$ and $S$, respectively.\\
In the next theorem, we obtain a rigidity result between two $1$-equivalent flows, where one of them is an Anosov geodesic flow.


\begin{theorem}\label{R-conjugation}
Let $M$, $N$ be two complete Riemannian manifolds such that sectional curvatures satisfy  $\inf K_M \geq \sup K_{N} \geq K_N\geq -b^2$, for some $b>0$. Assume that $M$ has finite volume and  $\phi^t_{_{M}}$ is Anosov. If  $\phi^t_{_{M}}$ and $\phi^t_{_{N}}$ are $1$-equivalent, then $K_M\equiv\sup K_N<0$.
\end{theorem}
It is important to note that in Theorem \ref{R-conjugation} the compactness of $M$ and $N$ is not required. But, when $M$ is a compact manifold, then Theorem \ref{R-conjugation} has an important consequence.  We say that a map $f\colon N\to S$ is an \emph{immersion} if for all $x\in N$ it holds that $$Df_{x}(v)=0 \, \,\, \text{if and only if} \ \ \, v=0.$$
\begin{corollary}\label{C0}
Let $M$ be a compact manifold and $N$ a complete manifold. Assume that $\phi^t_{_{M}}$ is Anosov and $\inf K_{M}\geq \sup K_N\geq K_N \geq -b^2$, for some $b>0$.  Then, if $\phi^t_{_{M}}$ and $\phi^t_{_{N}}$ are $C^1$-equivalent by a $C^1$ injective immersion,  then  $K_M\equiv \sup K_N<0$.
\end{corollary}
The above result imposes a certain rigidity for regular {equivalences} of two Anosov geodesic flows. Observe that the only condition about the dimension of $M$ and $N$ is $\text{dim}M\leq \text{dim}N$, since $f$ is an immersion. Therefore, 
if $\text{dim}M=\text{dim}N$, then $f$ is a $C^1$-diffeomorphism and  $N$ should be compact. However, if  $N$ is a compact manifold and $\text{dim}\,M=\text{dim}\,N$ we have a more general result by mixing the proofs of Theorem \ref{curvature} and Theorem \ref{R-conjugation}. 

\begin{theorem}\label{R-conjugation-WCP}
Let $M$, $N$ be two compact  Riemannian manifolds with the same dimension such that $\inf K_M \geq \sup K_{N}$, and $M$ has no conjugate points. If $\phi^t_{_{M}}$ and $\phi^t_{_{N}}$ are $1$-conjugate,  then $K_M\equiv K_{N} \equiv {\sup K_N}$.
\end{theorem}

Since compact manifolds with Anosov geodesic flow have no conjugate points (cf. \cite{kli:kli}), an immediate consequence is
\begin{corollary}\label{C01}
Let $M$, $N$ be two compact  Riemannian manifolds with the same dimension such that $\inf K_M \geq \sup K_{N}$ and  $\phi^t_{_M}$ is Anosov. If $\phi^t_{_{M}}$ and $\phi^t_{_{N}}$ are $1$-conjugate,  then $K_M\equiv K_{N} \equiv {\sup K_N}$.
\end{corollary}

\noindent The previous results are related to the following conjecture (cf. Conjecture 5.2.1  at  \cite{Burns}):\\
\ \\
\indent \textbf{Conjecture:} \emph{Compact Riemannian manifolds with negative curvature must be isometric if they have $C^0$-conjugate geodesic flows.}\\
\ \\
Note that, in particular, the hypothesis of Theorem \ref{R-conjugation-WCP} assures that there is no $1$-conjugacy unless the curvature of both manifolds is constant, which proves the conjecture for $1$-conjugacy (consequently for  $C^1$-conjugacy) and some relation of the curvatures.\\
\indent The latter results are in the direction of recent research related to the smoothness of conjugacy for geodesic flows in compact manifolds. 
Matters related to rigidity in the conjugacy of geodesic flows have been and are being much studied  recently. Let us cite some related articles. 
The first is an article by three authors, Gallot, Besson, and Courtois (see \cite{GBC1996}), which among other things, proves that if the geodesic flow of a compact manifold locally symmetric and negative curvature is $C^1$-conjugate to the geodesic flow of another  manifold, then the metrics are isometric (see also \cite{Distrib_Lyap_Diff_1990} for other results about the regularity of conjugacy). In \cite{Croke} the conjecture was shown for manifolds of $\emph{rank}\geq 2$.  Also,  J. Feldman and D. Ornstein showed that the conjugacy between two geodesic flows of surfaces of negative curvature is $C^1$, which was generalized by Pollicott (see \cite{Pollicott1990}) showing that the conjugacy is $C^\infty$. Note that the manifold $M$ in Theorem \ref{R-conjugation-WCP} does not have necessarily negative curvature, therefore that result of Feldman and Ornstein  does not apply. 
Feres and Katok (see \cite{Feres_katok_1989}), proved that when the horospheric foliation is smooth and the 
curvature is negative, then the flow is $C^1$-conjugate to a
geodesic flow of a manifold of constant negative curvature (see \cite{Kanai} for the smooth case). To finish our references, there is a result of C. Connell 
(see \cite{Connell_2003}) which assumes a relation of curvatures between two compact manifolds one of them being locally symmetric, 
then he shows that Lipschitz semi-conjugacy  between their geodesic flows makes the other manifold also locally symmetric. In other words,
in the locally symmetrical context, we have rigidity for conjugacy. Here we observe that, in this work, all manifolds involved are not necessarily locally symmetric, which makes our results interesting ({see \cite{Croke2} for more results about rigidity in Riemannian manifolds}).

In Section \ref{proof of T-R-C},  we will prove the Theorems  \ref{R-conjugation}  and \ref{R-conjugation-WCP} and their corollaries. %

\noindent We conclude this introduction by showing the relations of the techniques developed in this work with the scenario of Lyapunov exponents.

In \cite{butler:butler}, Butler studied the rigidity results of Lyapunov exponents for geodesic flows on a compact negatively curved manifold. He proved that if each periodic orbit of the geodesic flow has the same Lyapunov exponent over the unstable bundle, then the manifold has constant negative sectional curvature. 

Since the geodesic flow on compact manifolds of negative curvature is of Anosov type, then by  Butler's result we can ask the following:\\
\ \\
\textbf{Conjecture 1:} \emph{Let $M$ be a complete Riemannian manifold with finite volume and Anosov geodesic flow. If the unstable Lyapunov exponents are constant in
all periodic orbits, then $M$ has constant negative sectional curvature.}\\
\ \\
Let $M$ be a manifold with curvature bounded below. For each $x\in M$, let $T_xM$ be the tangent space at $x$. So, for each plane $P\subset T_{x}M$, we denote by $K_{x}(P)$ the sectional curvature of the plane $P$. If the geodesic flow $\phi^t_{_{M}}$ is Anosov then, from Corollary \ref{NonNegCur}, we define the positive and finite number $c_{_{\inf}}$ as follows,  $$-c_{_{\inf}}^2:=\ds\inf_{\substack{x\in M \\ P\in T_x M }}K_{x}(P).$$
Therefore, using the techniques developed to prove  Theorem \ref{curvature}, we reduce Conjecture 1 to the following conjecture (see Corollary \ref{Conj2-Conj1}).\\
\ \\
\textbf{Conjecture 2:} \emph{Let $M$ be a complete Riemannian manifold with finite volume, sectional curvature bounded below, and Anosov geodesic flow. If the unstable Lyapunov exponents in all periodic orbits are constant equal to $a$, then $a=c_{_{\inf}}$.}
\ \\

It was an open problem that the Anosov condition in the case of infinite volume does not imply the condition of no conjugate points (cf. \cite{handbook:handbook}). However, the authors in \cite{RomanaMelo3}, recently  gave a positive answer to this fact. Thus, the proof of Theorem \ref{curvature} will be carried out in two steps. In the first phase, we will prove the inequality relating any constant of contraction of the flow with the exponential of the bound of the curvature, assuming no conjugate points and Anosov condition, without any assumption about the volume of the manifold, see Subsection \ref{inequality curvature} (or from \cite{RomanaMelo3} only  assuming the Anosov condition). In the second phase, we will prove that the Lyapunov exponents are constants for every point and, finally, using geometric arguments associated with geodesic flows 
in manifold without conjugate points, we will conclude that the curvature is constant. For the proofs of Theorem \ref{R-conjugation} and Theorem \ref{R-conjugation-WCP},  we will use the equivalence to transfer all information of the hyperbolicity from $\phi^t_{_{N}}$ to $\phi^t_{_{M}}$ and thus use the techniques of Theorem \ref{curvature}.




\section{Notation and Preliminaries}

Throughout this paper, $M=(M,g)$ will denote a boundaryless complete Riemannian manifold of dimension $n\geq 2$, $TM$  its tangent bundle, $SM$ 
its unit tangent bundle. The mapping $\pi\colon TM\to M$ will denote the canonical projection and $\mu$ the Liouville measure of $SM$. All settings of this section can be found in \cite{gabriel:paternain} (see also \cite{handbook:handbook}). 


\subsection{Geodesic flow }\label{GeoFlow}

Let $\theta=(p,v)$ be a point of $SM$ and let $\gamma_{\theta}(t)$ be the unique geodesic with initial conditions $\gamma_{\theta}(0)=p$ and 
$\gamma_{\theta}'(0)=v$. For a given $t\in \mathbb{R}$, let\break $\phi^t_{_{M}}:SM \to SM$ be the diffeomorphism given by 
$\phi_{_{M}}^t(\theta)=(\gamma_{\theta}(t),\gamma_{\theta}'(t))$. Recall that this family is a flow (called the \textit{geodesic flow}) in the sense that 
$\phi^{t+s}_{_{M}}=\phi^{t}_{_{M}}\circ \phi^{s}_{_{M}}$ for all $t,s\in \mathbb{R}$. \\
Let $V:=\emph{ker}\, D\pi$ be the \textit{vertical} sub-bundle of $T(TM)$ (tangent bundle of $TM$). 
Let $\alpha\colon TTM\to TM$ be the Levi-Civita connection map of $M$. Let $H:=\text{ker}\,  \alpha$ be the \emph{horizontal} sub-bundle. Recall that, $\alpha$ is defined as
follows: Let $\xi \in T_{\theta}TM$ and\break $z:(-\epsilon,\epsilon) \rightarrow TM$  be a curve adapted to $\xi$, \emph{i.e.},  $z(0) = \theta$ and 
$z'(0) = \xi$, where\break $z(t) = (\alpha(t),Z(t))$, then
$$K_{\theta}(\xi)=\nabla_{\frac{\partial}{\partial\,t}}Z(t)|_{t=0}.$$


{For each $\theta$, the maps $d_{\theta}\pi|_{H(\theta)}: H(\theta) \rightarrow T_pM$ and $K_{\theta}|_{V(\theta)}:V(\theta) \rightarrow T_pM$ are linear
isomorphisms. Furthermore, $T_{\theta}TM = H(\theta) \oplus V(\theta)$ and the map  $j_{\theta}:T_{\theta}TM \rightarrow T_pM \times T_pM$ given by 
	$$ j_{\theta}(\xi) = (D_{\theta}\pi(\xi),K_{\theta}(\xi))        $$
	is a linear isomorphism. }

{	Using the decomposition $T_{\theta}TM = H(\theta) \oplus V(\theta)$, we can identify a vector in $\xi \in T_{\theta} TM$ with the pair of vectors in
$T_{p}M$, $(D_{\theta}\pi(\xi),K_{\theta}(\xi))$  and define naturally a Riemannian metric on $TM$
	that makes $H(\theta)$ and $V(\theta)$ orthogonal. This metric is called the Sasaki metric and is given by
	$$ g_{\theta}^S(\xi,\eta) = \langle D_{\theta}\pi(\xi), D_{\theta}\pi(\eta)  \rangle  +  \langle K_{\theta}(\xi),    K_{\theta}(\eta)   \rangle. $$}


From now on, we consider the Sasaki metric restricted to the unit tangent bundle $SM$.  It is easy to prove that the geodesic flow preserves
the volume measure generated by this Riemannian metric in $SM$. Furthermore, this volume measure in $SM$ coincides with the Liouville measure 
$m$ up to a constant. When $M$ has finite volume the Liouville measure is finite.

 Consider the one-form $\beta$ in $SM$ defined for $\theta=(p,v)$ by 
$$\beta_{\theta}(\xi) = g_{\theta}^S(\xi,G(\theta)) = \langle D_{\theta}\pi(\xi),v \rangle_p.$$
Observe that $ker\, \beta_{\theta}\supset V(\theta) \cap T_{\theta} SM $.  It is possible to prove that a vector $\xi \in T_{\theta}TM$ lies in $T_{\theta}SM$ 
with $\theta=(p,v)$ if and only if $\langle \alpha_{\theta}(\xi) , v\rangle = 0$. Furthermore, 
$\beta$ is  a contact form invariant by the geodesic flow whose Reeb vector field is the geodesic vector field $G$. Furthermore, the sub-bundle $S=\ker \beta$
is the orthogonal complement of the subspace spanned by $G$. Since $\beta$ is invariant by the geodesic flow, then the sub-bundle $S$ is invariant by $\phi^{t}_{_{M}}$, \emph{i.e.}, $\phi^t_{_{M}}(S(\theta)) = S(\phi^{t}_{_{M}}(\theta))$ for all $\theta\in SM$ and for all $t\in\mathbb{R}$.

To understand the behavior of $d\phi^t_{_{M}}$ let us introduce the definition of a Jacobi field.  A vector field $J$ along of a geodesic
$\gamma_{\theta}$ is called the Jacobi field if it satisfies the equation 
\begin{equation}
J'' + R(\gamma'_{\theta},J)\gamma'_{\theta} = 0,
\end{equation}
where $R$ is the Riemann curvature tensor of $M$ and $``\,'\,"$ denotes the covariant derivative along $\gamma_{\theta}$. Note that, for 
$\xi=(w_1,w_2) \in T_{\theta}SM$,  with $w_1, w_2 \in T_{p} M$ and $\langle v, w_2 \rangle =0$, it is known that
$d(\phi^{t}_{_{M}})_{\theta}(\xi) = (J_{\xi}(t), J'_{\xi}(t))$, where $J_{\xi}$ denotes the unique Jacobi vector field along $\gamma_{\theta}$ such that 
$J_{\xi}(0) = w_1$ and $J'_{\xi}(0) = w_2$. For more details see \cite{gabriel:paternain}.

\subsection{No conjugate points and Ricatti equation}\label{NCP}
Suppose that $p$ and $q$ are points on a Riemannian manifold, and $\gamma$ is a geodesic that connects $p$ and $q$. Then $p$ and $q$ are \emph{conjugate points along} $\gamma$  if there exists a non-zero Jacobi field along $\gamma$  that vanishes at $p$ and $q$. 
When neither two  points in $M$ are conjugate, we say the manifold $M$ \emph{has no conjugate points}. Another important type of manifold for this paper are the manifolds without focal points, we say that a manifold  $M$ \emph{has no focal point}, if for any unit speed geodesic $\gamma$ in $M$  and  for any 
Jacobi vector field $Y$ on $\gamma$  such that $Y(0) = 0$ and $Y'(0)\neq 0$ we have $(||Y||^2)'(t)>0$, for any $t > 0$. It is clear that if
a manifold has no focal points, then also has no conjugate points. \ \\
\indent The more classical example of manifolds without focal and therefore without conjugate points  are the manifolds of non-positive curvature. It is possible to construct a manifold having  positive curvature somewhere, and without conjugate points (cf. \cite{DP:03}) . Manifolds without conjugate points are connected with manifolds with Anosov geodesic flow. In  \cite{mane:mane}, Ma\~n\'e proved that, for manifolds of finite volume, the Anosov property of the geodesic flow implies no conjugate points. This had been proved early by Klingenberg (cf.  \cite{kli:kli}) in the compact case. Recently, in \cite{RomanaMelo3}, the case of infinite volume was proved by the authors.

The last fact showed that, if we would like to work with the geodesic flows of  Anosov type, we assume then that our manifolds have no conjugate points (condition superfluous for manifolds of finite volume via Ma\~ne result). Therefore, from now on, we can assume that the manifold $M$ has no
conjugate points.\\ 
\indent Now suppose that $M$ has no conjugate points and its sectional curvatures are bounded below by $-c^2$. In this case, if the
geodesic flow $\phi^t_{_{M}}:SM \to SM$ is Anosov, then in \cite{bol:bol}, Bolton showed that $E^{s}(\theta) \cap V(\theta) = \{0\}$ and 
$E^{u}(\theta) \cap V(\theta) = \{0\}$ for all $\theta \in SM$. This last property will be very useful for the proof of Theorem \ref{curvature}.\\
\indent There are important subbundles of $TSM$ to study the dynamic behavior of the geodesic flow of manifolds without conjugate points, the \emph{Green subbundles} are defined as follows: 
$$G^s_{\theta}:=\{\xi\in T_{\theta}SM: \xi \,\, \text{is orthogonal to}\,\, G(\theta) \, \, \text{and} \, \, ||J_{\xi}(t)|| \, \, \text{is bounded to}\,\,  t\geq 0\}$$
and 
$$G^u_{\theta}:=\{\xi\in T_{\theta}SM: \xi \,\, \text{is orthogonal to}\,\, G(\theta) \, \, \text{and} \, \, ||J_{\xi}(t)|| \, \, \text{is bounded to}\,\,  t\leq 0\},$$
where $G(\theta)$ is the geodesic vector field, which are called the \emph{stable} and \emph{unstable Green subbundle}, respectively. {For a manifold without conjugate points and dimension $n$, then the dimension of Green subbundles is $n-1$. Moreover, if the curvature is nonpositive, then the Green subbundles depend continuously on $\theta\in SM$. In the general case, so characterizes Anosov geodesic flows,  if the Green subbundles depend continuously on $\theta$ and $G^s_{\theta}\cap G^u_{\theta}=\{0\}$, then the geodesic flow is Anosov (see \cite{eb:eb} for more details on Green subbundles).}

\indent For  $\theta = (p, v) \in SM$, we denote by $N(\theta):= \{w \in T_{x}M : \langle w, v\rangle = 0\}$.
By the 
identification of the Subsection \ref{GeoFlow} we can  write $S(\theta):=\emph{ker} \, \beta = N(\theta)\times N(\theta)$, $V(\theta) \cap S(\theta) =  \{0\} \times N(\theta) $ and $H(\theta) \cap S(\theta) =  N(\theta) \times \{0\} $. Thus, if  $E \subset S(\theta)$ is a subspace,
$\dim E = n-1$, and $E \cap (V(\theta) \cap S(\theta)) = \{0\}$ then 
$E \cap (H(\theta) \cap S(\theta))^{\perp} = \{0\}$. Hence, there exists a unique linear map $T: H(\theta) \cap S(\theta) \to  V(\theta) \cap S(\theta)$ such that $E$ is the graph of  $T$. In other words, there exists a unique linear map $T: N(\theta) \to N(\theta)$ such that $E = \{(v,Tv) : v \in N(\theta)\}$. Furthermore, the linear map $T$ is symmetric if and only if $E$ is Lagrangian (see \cite{gabriel:paternain}).\\

It is known that if the  geodesic flow is Anosov, then for each $\theta\in SM$, the sub-bundles $E^{s}({\theta})$ and
$E^{u}(\theta)$ are Lagrangian and $E^{s}({\theta}) \oplus E^{u}({\theta}) = S(\theta)$. Therefore, for each $t\in \mathbb{R}$, we can write 
$d(\phi^t_{_{M}})(E^{s}(\theta)) = E^{s} (\phi^{t}_{_{M}}(\theta)) = {\rm graph} \, U_{s}(t)$ and 
$d(\phi^t_{_{M}})(E^{u}(\theta)) = E^{u} (\phi^{t}_{_{M}}(\theta)) = {\rm graph} \, U_{u}(t)$, where 
$ U_{s}(t): N(\phi^{t}_{_{M}}(\theta)) \to N(\phi^{t}_{_{M}}(\theta))$ and $ U_{u}(t): N(\phi^{t}_{_{M}}(\theta)) \to N(\phi^{t}_{_{M}}(\theta))$ are symmetric maps.

Now we describe a useful method of L. Green (cf. \cite{gre:gre}), to see what properties the maps $U_{s}(t)$ and $U_{u}(t)$ satisfy.

Let $\gamma_{\theta}$ be a geodesic, and consider $V_1,\mathellipsis,V_n$ a system of parallel orthonormal vector fields  along
$\gamma_{\theta}$ with $V_n(t) = \gamma'_{\theta}(t)$.
If $Z(t)$ is a perpendicular vector field along $\gamma_{\theta}(t)$, we can write $$Z(t) =\displaystyle \sum_{i=1}^{n-1} y_{i}(t)V_i(t).$$ 
Note that $Z(s)$ can be identified with the curve $\alpha(s) = (y_{1}(s), \mathellipsis,y_{n-1}(s))$ and $Z'(s)$ can be identified with the curve $\alpha'(s) = (y_{1}'(s), \mathellipsis,y_{n-1}'(s))$. Conversely, 
any curve in $\mathbb{R}^{n-1}$ can be identified with a perpendicular vector field on $\gamma_{\theta}(t)$, so we can identify $N(\phi^{t}_{_{M}}(\theta))$ with $\mathbb{R}^{n-1}$ and consider the maps associated to stable and unstable subbundles defined in $\mathbb{R}^{n-1}$.

Now for each $t \in \mathbb{R}$, consider the symmetric matrix $R(t) = (R_{i,j}(t))$, where\break
$1 \leq i,j \leq n-1$, $R_{i,j}(t) = \langle R(\gamma'_{\theta}(t), V_i(t))\gamma'_{\theta}(t),  V_j(t)) \rangle$ and $R$ is the curvature tensor of $M$. 
The family of operators $ U_{s}(t): \mathbb{R}^{n-1} \to \mathbb{R}^{n-1}$ and $ U_{u}(t): \mathbb{R}^{n-1} \to \mathbb{R}^{n-1}$ satisfies the Ricatti
equation

\begin{eqnarray}\label{Ricatti}
U'(t) + U^{2}(t) + R(t) = 0,
\end{eqnarray}
(see the book \cite[Section 2]{gabriel:paternain}).\\
The \emph{Ricci Curvature} in a point $(x,v)\in SM$ is defined as 
$$\text{Ric}(x,v)= \frac{1}{n-1} \sum_{i=1}^{n-1} R(v, v_i, v, v_i),$$
where $\{v,v_1,v_2,\dots, v_{n-1}\}$ is an orthonormal basis of $T_{x}M$ (see \cite{docarmo} for more details). \\
Moreover, using the previous discussion, we have that the Ricci curvature along $\gamma_\theta(t)$ is given by 
\begin{equation}\label{Ricci Curvature}
\text{Ric}(\phi^t(\theta))=\frac{\text{tr}R(t)}{n-1},
\end{equation}
where $\text{tr}R(t)$ denotes trace of $R(t)$.
\ \\
Now consider the $(n-1) \times (n-1)$ matrix Jacobi equation
\begin{equation}\label{Jacobi}
Y''(t) + R(t)Y(t) = 0.
\end{equation}

If $Y(t)$ is a solution of (\ref{Jacobi}) then for each $x \in \mathbb{R}^{n-1}$, the curve $ \beta(t) = Y(t)x$ corresponds to a Jacobi perpendicular vector
on $\gamma_{\theta}(t)$.  For $\theta\in SM$, $s\in \mathbb{R}$, we consider  $Y_{\theta,s}(t)$ to be the unique solution of ($\ref{Jacobi}$) satisfying
$Y_{\theta,s}(0) = I$ and $Y_{\theta,s}(s) = 0$. In  \cite{gre:gre}, Green proves  that $\ds\lim_{s \to -\infty}Y_{\theta,s}(t)$ exists for all $\theta\in SM$ (see also \cite[Sect. 2]{eb:eb}). He also shows that if we define:
\begin{equation}\label{Jacobi Tensor}
Y^{+}_{\theta}(t) = \lim_{s \to -\infty}Y_{\theta,s}(t)
\end{equation}

\noindent we obtain a solution of Jacobi equation (\ref{Jacobi}) such that $\det Y^{+}_{\theta}(t)\neq 0$. Moreover, it is proved in \cite{gre:gre} 
(see also  \cite{mane:freire} and \cite{eb:eb}) that $\ds\frac{DY^{+}_{\theta}}{Dt}(t)=\lim_{s\to -\infty}\frac{DY_{\theta,s}}{Dt}(t)$. However, if 
$$U_s(\theta)=\frac{DY_{\theta,s}}{Dt}(0) ; \, \, U^{+}(\theta)=\frac{DY^{+}_{\theta}}{Dt}(0),$$
then  $$U^{+}(\theta)=\lim_{s\to -\infty}U_{s}(\theta).$$
And it can be proved easily that (see \cite{mane:freire})
$$U^{+}(\phi^{t}_{_{M}}(\theta))=\frac{DY^{+}_{\theta}}{Dt}(t){Y^{+}_{\theta}}^{-1}(t)$$
for every $t \in \mathbb{R}$. It follows that $U^{+}$ is a symmetric solution of the Ricatti equation (\ref{Ricatti}).
Analogously, taking the limit when $ s\to +\infty$,  we have defined $U^{-}(\theta)$, which also satisfies the Ricatti equation (\ref{Ricatti}). Furthermore, in \cite{gre:gre}, Green also showed that in the case of curvature bounded below by $-c^2$, symmetric solutions of the Ricatti equation which are defined for all $t\in \mathbb{R}$ are bounded by $c$. In particular,
$$\sup_{t\in \mathbb{R}}||U^{+}(t)||\leq c.$$\\
When the geodesic flow is Anosov, $U_u = U^{+}$ and $U_s=U^-$ with
$$\max\Big\{ \sup_{t\in \mathbb{R}}||U^{s}(t)||,\, \,  \sup_{t\in \mathbb{R}}||U^{u}(t)||\Big\} \leq c.$$


\section{Proof of Theorem \ref{curvature}}\label{Proof}

In this section, we show several lemmas that will be used to obtain the proof of Theorem \ref{curvature}. Our first lemma (Lemma 3.1) proves the first part of Theorem \ref{curvature} in a more general setting using only the no conjugate points condition (or only the Anosov condition by \cite{RomanaMelo3}), without any condition about the volume. 
Since any manifold of finite volume with Anosov geodesic flow has no conjugate points (cf. \cite{mane:mane}), Lemma 3.1 implies in fact the proof of the of item \text{(a)} of Theorem \ref{curvature}.  
\begin{lemma}\label{inequality curvature}
Let $M$ be a complete manifold with curvature bounded below by $-c^2$ without conjugate points and whose geodesic flow is Anosov. 	
If the constant of contraction of the geodesic flow is $\lambda$ then $\lambda \geq e^{-c}$.
\end{lemma}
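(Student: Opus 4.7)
The plan is to combine the Anosov upper bound on $\|d\phi^t|_{E^s}\|$ with a curvature-driven lower bound on how fast a stable Jacobi field can shrink. Fix $\theta\in SM$ and a nonzero $\xi\in E^s(\theta)$. Using the horizontal/vertical identification, $\xi$ corresponds to $(J(0),J'(0))$ where $J$ is a perpendicular Jacobi field along $\gamma_\theta$ lying in the stable direction, and $d\phi^t\xi$ corresponds to $(J(t),J'(t))$. Since $E^s(\phi^t\theta)={\rm graph}\,U_s(\phi^t\theta)$ with $U_s$ a symmetric solution of the Riccati equation \eqref{Ricatti} (constructed from the limit $s\to+\infty$ of $Y_{\theta,s}$, analogously to $U^+=U_u$ in the excerpt), we have $J'(t)=U_s(\phi^t\theta)J(t)$.

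The first key step is to bound $\|U_s\|\le c$ in operator norm. This is done exactly as in Green's argument (Lemma 2 of \cite{gre:gre}), since $U_s$ is a symmetric solution of the Riccati equation defined on a semi-infinite interval under a lower curvature bound; the no-conjugate-points hypothesis ensures the relevant limit exists and is finite, and the Anosov splitting identifies it with the stable operator. Given $\|U_s\|\le c$, symmetry of $U_s$ yields $\langle U_s w,w\rangle\ge -c\,\|w\|^2$, so
\begin{equation*}
\frac{d}{dt}\|J(t)\|^2 \;=\; 2\langle U_s(\phi^t\theta)J(t),J(t)\rangle \;\ge\; -2c\,\|J(t)\|^2,
\end{equation*}
and Gronwall gives the lower bound $\|J(t)\|\ge e^{-ct}\|J(0)\|$ for every $t\ge 0$.

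Next, the Sasaki norm is estimated: from $J'(0)=U_s(\theta)J(0)$ and $\|U_s\|\le c$,
\begin{equation*}
\|\xi\|_S^2 \;=\; \|J(0)\|^2+\|J'(0)\|^2 \;\le\; (1+c^2)\,\|J(0)\|^2,
\end{equation*}
while $\|d\phi^t\xi\|_S\ge\|J(t)\|$. Combining these with the previous step,
\begin{equation*}
\frac{\|d\phi^t\xi\|_S}{\|\xi\|_S} \;\ge\; \frac{e^{-ct}}{\sqrt{1+c^2}}.
\end{equation*}
The Anosov hypothesis, on the other hand, forces $\|d\phi^t\xi\|_S\le C\lambda^t\|\xi\|_S$ for $t\ge 0$. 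Comparing the two gives $(e^{-c}/\lambda)^t\le C\sqrt{1+c^2}$ for all $t\ge 0$, which is only possible if $e^{-c}\le\lambda$.

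The main obstacle is the uniform bound $\|U_s\|\le c$: the excerpt explicitly states this only for $U^+=U_u$, so I would need to invoke the time-reversed construction (taking limits $s\to+\infty$ instead of $s\to-\infty$) and verify that the Anosov splitting indeed coincides with this bounded symmetric Riccati solution. Once that is in place, the rest of the argument is a straightforward Riccati/Gronwall comparison against the Anosov contraction rate.
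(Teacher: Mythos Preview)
Your argument is correct. The key steps---the identity $J'(t)=U_s(\phi^t\theta)J(t)$, the bound $\|U_s\|\le c$, the Gronwall inequality $\|J(t)\|\ge e^{-ct}\|J(0)\|$, and the comparison with the Anosov contraction---are all valid, and the ``obstacle'' you flag is not really one: the paper itself uses $\|U^s(t)\|\le c$ and $\|U^u(t)\|\le c$ in its proof of this lemma, obtained exactly by the time-reversed Green construction you describe.

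Your route is, however, genuinely different from the paper's and in fact more direct for the inequality alone. The paper introduces the \emph{ratio}
\[
r(t)=\frac{\lambda^{-t}\|J_s(t)\|}{\lambda^{t}\|J_u(t)\|}
\]
of a stable and an unstable Jacobi field, shows $r$ is bounded, and then computes $r'(t)=r(t)\big(-2\log\lambda+\langle U^sJ_s,J_s\rangle/\|J_s\|^2-\langle U^uJ_u,J_u\rangle/\|J_u\|^2\big)$; the bound $\|U^s\|,\|U^u\|\le c$ forces $r'\ge \delta r$ if $\lambda<e^{-c}$, contradicting boundedness. You bypass the unstable field entirely and compare a single stable Jacobi field directly against the Anosov bound via Gronwall. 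What the paper's detour buys is the equality case: when $\lambda=e^{-c}$ the same computation gives $r'\ge 0$ with $r$ bounded, and integrating yields $\lim_{t\to\infty}\frac{1}{t}(\log\|J_u(t)\|-\log\|J_s(t)\|)=2c$, which is precisely the input to the rigidity Lemma~\ref{R. H. D.}. Your argument gives $e^{-ct}\|J(0)\|\le\|J(t)\|\le C\sqrt{1+c^2}\,e^{-ct}\|J(0)\|$ in the equality case, which also pins down the stable Lyapunov exponent, but the coupling of stable and unstable rates that the paper needs downstream is less immediate.
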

\begin{proof}
Since the curvature is bounded below by $-c^2$, then by Lemma 2.16 at \cite{handbook:handbook} (see also \cite[Lemma 2.17]{handbook:handbook}), we have  
\begin{equation}\label{EQNEW1}
\frac{1}{\sqrt{1+c^2}}e^{-ct}||\xi||\leq ||d(\phi^t_{_{M}})_{\theta}(\xi)||, \, \,   \,\, t\geq 0, \,\ \, \, \xi \in {E^s(\theta)},
\end{equation}
and 
\begin{equation}\label{EQNEW2}
||d(\phi^t_{_{M}})_{\theta}(\eta)||\leq e^{ct}||\eta||\sqrt{1+c^2}, \, \,  \, \,  t\geq 0, \, \,\, \eta \in {E^u(\theta)}.
\end{equation}
Using the definition of Anosov flow in (\ref{EQNEW1}) (similar argument using  (\ref{EQNEW2})), we have 
$$\frac{1}{\sqrt{1+c^2}}e^{-ct}||\xi||\leq ||d(\phi^t_{_{M}})_{\theta}(\xi)||\leq C\lambda^{t}||(\xi)||\, \,  \text{for all} \, \, \xi \in {E^s(\theta)}.$$
If we write $\lambda=e^{-\kappa}$, the last inequality becomes 
$$e^{-(c-\kappa)t}\leq C\sqrt{1+c^2}, \, \,   \,\, t\geq 0,$$
which is valid if and only if $c\geq \kappa$ or equivalently $\lambda=e^{-\kappa}\geq e^{-c}$ as we desired.

\end{proof}

\begin{corollary}\label{NonNegCur}
No manifold $M$ of finite volume and non-negative curvature has the geodesic flow of the Anosov type. 
\end{corollary}
\begin{proof}
By contradiction, if such manifold $M$ exists, then $M$ has no conjugate points (cf. \cite{mane:mane}). Since the curvature $K$ of $M$ is non-negative, then for all $\epsilon>0$, $K\geq -\epsilon^{2}$. Let $0<\lambda<1$ be the constant of contraction of $\phi^t_{_{M}}$, then by Theorem \ref{curvature} we have $\lambda\geq e^{-\epsilon}$ for all $\epsilon>0$, which implies that $\lambda\geq 1$ and this is a contradiction.   
\end{proof}

\subsection{Rigidity and Lyapunov exponents}

In this subsection, we shall prove that the map  $U^{+}(\theta)$ (from Subsection \ref{NCP}) has all information about Lyapunov exponents. The following 
Lemma was proved by Freire and Ma\~n\'e (\cite[Theorem II]{mane:freire}) in compact manifolds without conjugate points. Here we do the proof in the non-compact case for Anosov geodesic flows.
\begin{lemma}\label{Mane for Anosov}
Let $M$ be a complete manifold with curvature bounded below by $-c^2$ without conjugate points and whose geodesic flow is Anosov. Then 
$$\ds\lim_{t\to +\infty}\frac{1}{t}\log|\det d(\phi^{t}_{_{M}})_{\theta}|_{E^{u}(\theta)}|=\lim_{t\to +\infty}\frac{1}{t}\int_{0}^{t}\emph{tr}(U^{+}(\phi^{s}_{_{M}}(\theta)))\, ds$$
holds for every $\theta\in SM$.
\end{lemma}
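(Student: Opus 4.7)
The plan is to exploit the fact that $E^u(\theta)$ is the graph of $U^+(\theta)$, reduce $d\phi^t|_{E^u}$ to the Jacobi matrix $Y^+_\theta(t)$, and then apply the Abel/Jacobi--Liouville identity. The ``distortion'' coming from the Sasaki metric will be shown to be uniformly bounded, and therefore harmless in the Cesàro limit.

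First I would set up the parametrization. Fix $\theta\in SM$. Since the flow is Anosov with no conjugate points, the preliminaries give $E^u(\theta)=\mathrm{graph}\,U^+(\theta)$ in the splitting $S(\theta)=N(\theta)\times N(\theta)$, and $\|U^+(\phi^t\theta)\|\leq c$ for every $t$. For $w\in N(\theta)$ the vector $\xi_w:=(w,U^+(\theta)w)$ lies in $E^u(\theta)$, and by the Jacobi description of $d\phi^t$ together with $Y^+_\theta(0)=I$ and $\tfrac{D}{Dt}Y^+_\theta(t)=U^+(\phi^t\theta)Y^+_\theta(t)$ one has
\begin{equation*}
d\phi^t_\theta(\xi_w)=\bigl(Y^+_\theta(t)w,\ U^+(\phi^t\theta)Y^+_\theta(t)w\bigr).
\end{equation*}

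Next I would compute the Sasaki Jacobian. If $\{w_i\}$ is an orthonormal basis of $N(\theta)$, then the Gram matrix of $\{\xi_{w_i}\}$ in the Sasaki metric is $I+(U^+(\theta))^2$, and the Gram matrix of $\{d\phi^t_\theta(\xi_{w_i})\}$ is $Y^+_\theta(t)^{T}\bigl(I+(U^+(\phi^t\theta))^2\bigr)Y^+_\theta(t)$. Taking square roots of determinants,
\begin{equation*}
\bigl|\det d\phi^t|_{E^u(\theta)}\bigr|
=|\det Y^+_\theta(t)|\cdot
\sqrt{\dfrac{\det\bigl(I+(U^+(\phi^t\theta))^2\bigr)}{\det\bigl(I+(U^+(\theta))^2\bigr)}}.
\end{equation*}
The uniform bound $\|U^+\|\leq c$ sandwiches the radical between $1$ and $(1+c^2)^{(n-1)/2}$, so its logarithm is $O(1)$ uniformly in $t$.

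Then I would apply the Abel--Jacobi--Liouville formula. Since $Y^+_\theta(t)$ is invertible and satisfies $(Y^+_\theta)'(t)=U^+(\phi^t\theta)\,Y^+_\theta(t)$,
\begin{equation*}
\frac{d}{dt}\log|\det Y^+_\theta(t)|=\mathrm{tr}\bigl(U^+(\phi^t\theta)\bigr),
\end{equation*}
and integrating from $0$ (where $Y^+_\theta(0)=I$),
\begin{equation*}
\log|\det Y^+_\theta(t)|=\int_0^t \mathrm{tr}\bigl(U^+(\phi^s\theta)\bigr)\,ds.
\end{equation*}
Combining with the previous identity and dividing by $t$ gives
\begin{equation*}
\frac{1}{t}\log\bigl|\det d\phi^t|_{E^u(\theta)}\bigr|
=\frac{1}{t}\int_0^t \mathrm{tr}\bigl(U^+(\phi^s\theta)\bigr)\,ds+O(1/t),
\end{equation*}
which yields the claimed equality of limits (either both exist and coincide, or the $\limsup$/$\liminf$ agree).

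The only real obstacle is the bookkeeping of step two: making sure the Sasaki Jacobian really factors into $|\det Y^+_\theta(t)|$ times a uniformly bounded quantity, i.e., that the distortion produced by reading $d\phi^t|_{E^u}$ through the graph of $U^+$ rather than the ``horizontal'' parametrization does not contribute to the Lyapunov-type average. This is precisely where the a priori bound $\|U^+\|\leq c$ from Green's lemma is used, and it is exactly the non-compact substitute for the argument of Freire--Mañé.
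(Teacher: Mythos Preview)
Your proof is correct and follows essentially the same route as the paper: both factor $d\phi^t|_{E^u}$ through $Y^+_\theta(t)$ via the graph parametrization of $E^u$, control the Sasaki distortion using $\|U^+\|\leq c$ (the paper writes this as $d\phi^t|_{E^u_\theta}=\pi^{-1}_{\phi^t\theta}\circ Y^+_\theta(t)\circ\pi_\theta$ with $|\det\pi^{-1}_\theta|=\sqrt{\det(I+(U^+(\theta))^2)}$, which is exactly your Gram--matrix factor), and finish with the Liouville trace identity for $\det Y^+_\theta$. One inconsequential slip: the radical lies between $(1+c^2)^{-(n-1)/2}$ and $(1+c^2)^{(n-1)/2}$ rather than between $1$ and $(1+c^2)^{(n-1)/2}$, but your conclusion that its logarithm is $O(1)$ is unaffected.
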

\begin{proof}
For each $\theta=(p,v)\in SM$, we denote by $N(\theta)$ the subspace of $T_{p}M$  orthogonal to $v$. Then by construction,
for all $x\in N(\theta)$ the Jacobi field $Y^{+}_{\theta}(t)x$ is an unstable Jacobi field. Thus, it follows that  $U^{+}(\theta)$ satisfies that 
$$E^{u}(\theta)=\text{graph}\, U^{+}(\theta)=\{(x,U^{+}(\theta)x):x\in N(\theta)\}.$$
Let $\pi_{\theta}\colon E^{u}(\theta)\to N(\theta)$ the projection in the first coordinate. Then 
$$\pi_{\theta}^{-1}(v)=(v, U^{+}(\theta)v).$$
Therefore
\begin{equation}\label{projections}
\sup_{\theta\in SM}||\pi^{-1}_{\theta}||\leq \sqrt{1+c^2}.
\end{equation}
Moreover, for $(v,w)\in E^{u}(\theta)$
\begin{eqnarray}
\pi^{-1}_{\phi^{t}_{_{M}}(\theta)}\circ Y^{+}_{\theta}(t)\circ \pi_{\theta}(v,w)&=& \pi^{-1}_{\phi^{t}_{_{M}}(\theta)}( Y^{+}_{\theta}(t)v)\vphantom{\,}\nonumber \\
&=&(Y^{+}_{\theta}(t)v, U^{+}(\phi^{t}_{_{M}}(\theta))Y^{+}_{\theta}(t)v) \nonumber \\
&=&\left( Y^{+}_{\theta}(t)v, \frac{DY^{+}_{\theta}}{Dt}(t)v\right) \nonumber\\
&=&d\phi^{t}_{_{M}}(v,w).\nonumber
\end{eqnarray}
In other words, 
\begin{equation}\label{diferential and projections}
d(\phi^{t}_{_{M}})_{\theta}|_{E^{u}_{\theta}}=\pi^{-1}_{\phi^{t}_{_{M}}(\theta)}\circ Y^{+}_{\theta}(t)\circ \pi_{\theta}.
\end{equation}
Thus, by the equation (\ref{diferential and projections})
\begin{eqnarray}\label{conjugation}
\ds\lim_{t\to +\infty}\frac{1}{t}\log|\det\,  d(\phi^{t}_{_{M}})_{\theta}|_{E^{u}(\theta)}|&=&\lim_{t\to +\infty}\frac{1}{t}\log |\det  \pi^{-1}_{\phi^{t}_{_{M}}(\theta)} |+\lim_{t\to +\infty}\frac{1}{t}\log |\det  Y^{+}_{\theta}(t)|\nonumber \\
&& + \lim_{t\to +\infty}\frac{1}{t}\log |\det \pi_{\theta}|.
\end{eqnarray}
From Linear Algebra we know 
\begin{equation}\label{Linear Algebra}
\frac{1}{({|\det A^{-1}|)^{\frac{1}{m}}}}\leq ||A||\leq \frac{m||A^{-1}||^{m-1}}{|\det A^{-1}|},
\end{equation}
where $A$ is an invertible  linear map of  vector spaces of dimension $m$.\

As \rm{dim}\,$E^{u}(\theta)=n-1$, using the inequality (\ref{Linear Algebra}) and the inequalities
$||\pi_{\theta}||\leq 1$, \break $1\leq ||\pi^{-1}_{\theta}||\leq \sqrt{1+c^{2}}$ for all $\theta$, we concluded that 
$$1\leq ||\pi^{-1}_{\phi^{t}_{_{M}}(\theta)}||\leq (n-1)||\pi_{\phi^{t}_{_{M}}(\theta)}||^{n-2}|\det  \pi^{-1}_{\phi^{t}_{_{M}}(\theta)}|\leq (n-1)|\det  \pi^{-1}_{\phi^{t}_{_{M}}(\theta)}|$$
and 
$$|\det  \pi^{-1}_{\phi^{t}_{_{M}}(\theta)}|^{1-\frac{1}{n-1}}\leq (n-1)||\pi^{-1}_{\phi^{t}_{_{M}}(\theta)}||^{n-2}\leq (n-1)({1+c^{2}})^{\frac{n-2}{2}},$$
that is,
\begin{equation}\label{bounded of determinant}
\frac{1}{n-1}\leq |\det  \pi^{-1}_{\phi^{t}_{_{M}}(\theta)}|\leq (n-1)^{\frac{n-1}{n-2}}({1+c^{2}})^{\frac{n-1}{2}}.
\end{equation}
Therefore, the inequalities (\ref{conjugation}) and (\ref{bounded of determinant}) provide us with
\begin{equation}\label{bd1}
\ds\lim_{t\to +\infty}\frac{1}{t}\log|\det d(\phi^{t}_{_{M}})_{\theta}|_{E^{u}(\theta)}|=\lim_{t\to +\infty}\frac{1}{t}\log |\det  Y^{+}_{\theta}(t)|.
\end{equation}
Since $\det  Y^{+}_{\theta}(t)\neq 0$, then is easy to prove that 
\begin{equation}\label{bd2*}
\frac{d}{dt} \det Y^{+}_{\theta}(t)=\det Y^{+}_{\theta}(t)\, \text{tr}\left( \frac{DY^{+}_{\theta}(t)}{Dt}{Y^{+}_{\theta}}^{-1}(t)\right).
\end{equation}
From (\ref{bd1}) and (\ref{bd2*})
\begin{eqnarray}\label{bd2}
\ds\lim_{t\to +\infty}\frac{1}{t}\log|\det d(\phi^{t}_{_{M}})_{\theta}|_{E^{u}(\theta)}|&=&\lim_{t\to +\infty}\frac{1}{t}\int_{0}^{t}\text{tr}\left( \frac{DY^{+}_{\theta}(s)}{Ds}{Y^{+}_{\theta}}^{-1}(s)\right)ds \nonumber \\
&=&\lim_{t\to +\infty}\frac{1}{t}\int_{0}^{t}\text{tr}\left(U^{+}(\phi^{s}_{_{M}}(\theta)\right)\,  ds.
\end{eqnarray}
\end{proof}
\noindent The following lemma shows the rigidity of Lyapunov exponent. 
\begin{lemma}[Rigidity of Lyapunov Exponent]\label{R. H. D.} In the same conditions as \rm{Theorem} \ref{curvature}. \textit{If} $\lambda=e^{-c}$, \textit{then}  
\begin{equation}
\lim_{t \to +\infty}\frac{1}{t}\log ||d(\phi^{t}_{_{M}})_{\theta}(\xi)||=-c \ \ \ \text{and} \ \ \lim_{t \to +\infty}\frac{1}{t}\log ||d(\phi^{t}_{_{M}})_{\theta}(\eta)||=c
\end{equation}
for all $\theta\in SM$, $\xi\in E^{s}({\theta})$ and $\eta\in E^{u}({\theta})$.
\end{lemma}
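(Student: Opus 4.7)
The plan is to re-examine the function $r(t) = \lambda^{-t}\|J_s(t)\|/(\lambda^t\|J_u(t)\|)$ introduced in the proof of Lemma \ref{inequality curvature} in the borderline case $\lambda = e^{-c}$, where the strict sign argument just barely fails. From that same proof we keep the a priori upper bound $r(t) \leq \sqrt{1+c^2}\|\xi\|/\|\eta\|$. With $\lambda = e^{-c}$ we have $-2\log\lambda = 2c$, so the logarithmic-derivative formula (\ref{eq2 IC}) reads
\begin{equation*}
\frac{r'(t)}{r(t)} \;=\; 2c \;+\; \frac{\langle J_s'(t), J_s(t)\rangle}{\|J_s(t)\|^2} \;-\; \frac{\langle J_u'(t), J_u(t)\rangle}{\|J_u(t)\|^2}.
\end{equation*}
Because $J_s'(t) = U^s(t)J_s(t)$ and $J_u'(t) = U^u(t)J_u(t)$ with $\|U^s(t)\|, \|U^u(t)\| \leq c$, each of the two symmetric fractions lies in $[-c,c]$. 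Hence $r'(t)/r(t) \geq 0$, i.e. $r$ is nondecreasing. Being also bounded above and strictly positive, $r(t)$ converges to a finite positive limit.

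Taking logarithms gives $\log r(t) = 2ct + \log\|J_s(t)\| - \log\|J_u(t)\|$, a bounded function of $t$. Dividing by $t$ and letting $t\to\infty$ yields
\begin{equation*}
\lim_{t\to\infty}\left(\frac{1}{t}\log\|J_u(t)\| \;-\; \frac{1}{t}\log\|J_s(t)\|\right) \;=\; 2c.
\end{equation*}
On the other hand, $\frac{d}{dt}\log\|J_u(t)\| = \langle J_u',J_u\rangle/\|J_u\|^2 \in[-c,c]$ shows $\limsup_{t\to\infty}\frac{1}{t}\log\|J_u(t)\|\leq c$, and likewise $\liminf_{t\to\infty}\frac{1}{t}\log\|J_s(t)\|\geq -c$. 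A quantity whose limsup is $\leq c$ minus a quantity whose liminf is $\geq -c$ can converge to $2c$ only if both one-sided extremes are attained in the limit, forcing
\begin{equation*}
\lim_{t\to\infty}\frac{1}{t}\log\|J_u(t)\| = c, \qquad \lim_{t\to\infty}\frac{1}{t}\log\|J_s(t)\| = -c.
\end{equation*}

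To finish, convert these Jacobi-field asymptotics into statements about $d\phi^{t}$. Since $\|d\phi^{t}_\theta(\eta)\|^2 = \|J_u(t)\|^2 + \|J_u'(t)\|^2$ and $\|J_u'(t)\|\leq c\|J_u(t)\|$, we have $\|J_u(t)\|\leq \|d\phi^{t}_\theta(\eta)\|\leq \sqrt{1+c^2}\,\|J_u(t)\|$, so the $\frac{1}{t}\log$-limit is preserved and equals $c$; the identical sandwich applied to $J_s$ gives $\frac{1}{t}\log\|d\phi^{t}_\theta(\xi)\|\to -c$. The main point where the argument could go wrong, and therefore the step I would check most carefully, is the transition $r'/r \geq 0 \Rightarrow \log r$ bounded: it relies essentially on the a priori upper bound (\ref{eq1}) from Lemma \ref{inequality curvature} together with the symmetric bound $\|U^{s,u}\|\leq c$, which together squeeze the unstable and stable rates of growth to the extreme values allowed by the curvature lower bound $-c^2$.
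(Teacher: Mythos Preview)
Your proof is correct and follows the same overall architecture as the paper's: both use the function $r(t)$ from Lemma~\ref{inequality curvature}, observe that $\lambda=e^{-c}$ makes $r'(t)\geq 0$, invoke the upper bound (\ref{eq1}) to conclude $\log r(t)$ is bounded, and deduce $\lim_{t\to\infty}\frac{1}{t}(\log\|J_u(t)\|-\log\|J_s(t)\|)=2c$.

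The only genuine difference lies in how each argument pins down the two individual limits from their difference. The paper uses the Anosov inequalities $\|d\phi^t(\xi)\|\leq C\lambda^t\|\xi\|$ and $\|d\phi^t(\eta)\|\geq C^{-1}\lambda^{-t}\|\eta\|$ to obtain the one-sided constraints $a_s\leq -c$ and $a_u\geq c$ on subsequential limits, after first proving a separate claim ruling out $\frac{1}{t_n}\log\|J_s(t_n)\|\to-\infty$. You instead use the Riccati bound $\|U^{s,u}\|\leq c$ directly to obtain the \emph{opposite} one-sided constraints $\limsup\frac{1}{t}\log\|J_u(t)\|\leq c$ and $\liminf\frac{1}{t}\log\|J_s(t)\|\geq -c$. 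Either pair of inequalities, combined with the difference being $2c$, forces both limits. Your route is a bit tidier: it avoids the auxiliary boundedness claim and the explicit Anosov constants, getting everything from the curvature bound alone.
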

\begin{proof}

If $\lambda = e^{-c}$, then by definition of Anosov flow, (\ref{EQNEW1}), and (\ref{EQNEW2}) we have 

$$\frac{1}{\sqrt{1+c^2}}e^{-ct}||\xi||\leq ||d(\phi^t_{_{M}})_{\theta}(\xi)||\leq Ce^{-ct}||\xi||, \, \, \,\,\, t\geq 0, \,\,\, \, \, \xi \in {E^s(\theta)},$$
and 

$$\frac{1}{C}e^{ct}||\eta||\leq ||d(\phi^t_{_{M}})_{\theta}(\eta)||\leq e^{ct}||\eta||\sqrt{1+c^2}, \, \,\, \, \,\,\, t\geq 0, \,\,\, \, \,\, \, \eta \in {E^u(\theta)}.$$
The above inequalities conclude the proof of Lemma.
\end{proof}
\begin{corollary}\label{C. R. H. D.}
If $\lambda=e^{-c}$, then 
$$
\ds\lim_{t \to +\infty}\frac{1}{t}\log |\det \,d(\phi^{t}_{_{M}})_{\theta}|_{E^{s}(\theta)}|=-c\cdot \,\emph{dim}\, E^{s}(\theta)=-c(n-1),
$$ and 
$$\ds\lim_{t \to +\infty}\frac{1}{t}\log |\det d(\phi^{t}_{_{M}})_{\theta}|_{E^{u}(\theta)}|=c\cdot \, \emph{dim}\, E^{u}(\theta)=c(n-1).$$
\end{corollary}
\begin{proof}
From Lemma \ref{R. H. D.}, we have  $\ds\lim_{t \to +\infty}\frac{1}{t}\log ||d(\phi^{t}_{_{M}})_{\theta}(\eta)||=c$ for all $\theta\in SM$ and\break
$\eta\in E^{u}(\theta)$, then it is easy to see that 
$$\ds\lim_{t \to +\infty}\frac{1}{t}\log |\det d(\phi^{t}_{_{M}})_{\theta}|_{E^{u}(\theta)}|=c\cdot \text{dim} E^{u}(\theta)=c(n-1).$$

The proof for $E^{s}(\theta)$ is analogous.
\end{proof}

\noindent To conclude this section, we prove the following Lemma, which provides the final step in proving the item \text{(b)} of the Theorem \ref{curvature}.
\begin{lemma}[Rigidity]\label{L. R. H. D.}
In the same conditions as \emph{Theorem \ref{curvature}},  
$\lambda=e^{-c}$ if and only if  the sectional curvature of the manifold $M$ is constant, equal to $-c^2$.
\end{lemma}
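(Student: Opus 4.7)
The plan. One direction is essentially a direct calculation: if $K_M \equiv -c^2$, then along every geodesic the curvature matrix $R(t)$ equals $-c^2 I$, the Riccati equation $U' + U^2 + R = 0$ admits $U^u \equiv cI$ as its bounded symmetric solution, and solving the Jacobi equation explicitly shows that stable/unstable Jacobi fields decay/grow exactly as $e^{\mp ct}$; this produces contraction constant $\lambda = e^{-c}$.

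For the converse direction, I would combine Corollary \ref{C. R. H. D.} with Lemma \ref{Mane for Anosov}. The corollary supplies, for every $\theta \in SM$,
\[
\lim_{t \to +\infty}\frac{1}{t}\log|\det d\phi^{t}_{\theta}|_{E^{u}(\theta)}| = c(n-1),
\]
while Lemma \ref{Mane for Anosov} rewrites this limit as the Birkhoff average of $tr\,U^+$ along the orbit of $\theta$. Since $M$ has finite volume, the Liouville measure $\mu$ is finite and $\phi^t$-invariant, and since $tr\,U^+$ is bounded by $c(n-1)$ (because $\|U^+\| \leq c$ on an $(n-1)$-dimensional space, by Green's lemma), Birkhoff's theorem applies and yields
\[
\int_{SM} tr\,U^+\,d\mu = c(n-1)\mu(SM).
\]
Combined with the pointwise inequality $tr\,U^+ \leq c(n-1)$, this forces $tr\,U^+(\theta) = c(n-1)$ for $\mu$-almost every $\theta$. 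Because $U^+(\theta)$ is symmetric with operator norm at most $c$, having all of its $n-1$ eigenvalues attain the upper bound $c$ means $U^+(\theta) = cI$ at $\mu$-a.e.\ $\theta$.

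The upgrade from almost-everywhere to everywhere is standard: the Anosov hypothesis makes $E^u$, and hence $\theta \mapsto U^+(\theta) = U^u(\theta)$, continuous; as $\mu$ charges every open set of $SM$, the identity $U^+ \equiv cI$ holds on all of $SM$. Substituting into the Riccati equation $(U^+)' + (U^+)^2 + R(t) = 0$ along any geodesic $\gamma_\theta$ immediately gives $R(t) \equiv -c^2 I$, so every sectional curvature $K(\gamma'_\theta(t), V_i(t))$ equals $-c^2$. Since $\theta$ and the orthonormal frame $(V_i)$ perpendicular to $\gamma'_\theta$ are arbitrary, we conclude $K_M \equiv -c^2$.

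The main delicate point is the passage from the pointwise statement about Birkhoff averages to the integrated identity, since we do not assume ergodicity. Birkhoff's theorem, however, requires only integrability and invariance of $\mu$: the orbit averages converge in $L^1(\mu)$ to a $\phi^t$-invariant function with the same integral as $tr\,U^+$, and since the pointwise limit is identically $c(n-1)$, integrating yields precisely $c(n-1)\mu(SM)$. A secondary technical point is the identification $U^+ = U^u$ in the Anosov setting together with the continuity of $U^u$, both of which are already recorded in the preliminaries.
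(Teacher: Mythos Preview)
Your argument is correct, and it takes a genuinely different route from the paper's. Both proofs start from Lemma~\ref{Mane for Anosov} and Corollary~\ref{C. R. H. D.} to obtain $\lim_{t\to\infty}\frac{1}{t}\int_0^t tr\,U^+(\phi^s(\theta))\,ds = c(n-1)$ for every $\theta$. From there the paper applies Cauchy--Schwarz and the Riccati equation to bound this time average by an expression involving the time average of the Ricci curvature, deduces $\lim_{t\to\infty}\frac{1}{t}\int_0^t Ric(\phi^s(\theta))\,ds = -c^2$, invokes a result of Guimar\~aes to ensure integrability of $Ric$ on $SM$, and finally uses Birkhoff plus the pointwise bound $Ric \geq -c^2$ to conclude $Ric \equiv -c^2$ and hence $K \equiv -c^2$. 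You instead exploit directly the pointwise bound $tr\,U^+ \leq c(n-1)$, which follows immediately from $\|U^+\|\leq c$ and symmetry; since $tr\,U^+$ is bounded it is certainly in $L^1(\mu)$, Birkhoff gives $\int_{SM} tr\,U^+\,d\mu = c(n-1)\mu(SM)$, and equality in the pointwise bound forces $U^+ = cI$ a.e., upgraded to everywhere by the continuity of $E^u$ (standard for Anosov flows). Feeding $U^+\equiv cI$ back into the Riccati equation yields $R\equiv -c^2 I$ directly. Your approach is more elementary: it avoids the Cauchy--Schwarz manipulation, bypasses the Guimar\~aes integrability result entirely, and reaches the curvature operator (not just its trace) in one step. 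The paper's detour through the Ricci curvature is not needed once one notices that the operator-norm bound already controls the trace sharply.
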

\begin{proof}
Note that if $K=-c^{2}$, then the classical proof that the geodesic flow is Anosov implies that  $\lambda=e^{-c}$. So our task is to prove the other side. 
In fact: from  Lemma \ref{Mane for Anosov} and Corollary \ref{C. R. H. D.}, we have that 
$$\lim_{t\to +\infty}\frac{1}{t}\int_{0}^{t}\text{tr}(U^{+}(\phi^{s}_{_{M}}(\theta)))\, 
ds=\ds\lim_{t\to +\infty}\frac{1}{t}\log|\det d(\phi^{t}_{_{M}})_{\theta}|_{E^{u}(\theta)}|=c(n-1),$$
for all $\theta\in SM$.

Since $U^{+}(\phi^{s}_{_{M}}(\theta))$ is symmetric then is easy to see that 
$$(\text{tr}(U^{+}(\phi^{s}_{_{M}}(\theta))))^{2}\leq (n-1)\text{tr}(({U^{+}}(\phi^{s}_{_{M}}(\theta))^{2})$$
Since the sectional curvature satisfies $K\geq -c^{2}$, then taking  trace and integrating from $0$ to $t$  the Ricatti equation (\ref{Ricatti}) and taking into account (\ref{Ricci Curvature}),  from 
the Cauchy-Schwarz inequality we have  
\begin{eqnarray}\label{eq6 R. H. D.}
\frac{1}{t}\int_{0}^{t}\text{tr}(U^{+}(\phi^{s}_{M}(\theta)))ds & \leq & \sqrt{\frac{1}{t}\int_{0}^{t}(\text{tr}(U^{+}(\phi^{s}_{_{M}}(\theta))))^{2}}ds \nonumber \\ 
&\leq & \sqrt{\frac{n-1}{t}\int_{0}^{t}{\text{tr}((U^{+}}(\phi^{s}_{_{M}}(\theta)))^{2})}ds\nonumber \\ 
&=&\sqrt{-\frac{n-1}{t}\left( \int_{0}^{t}\text{tr}((U^{+}){'}(\phi^{s}_{_{M}}(\theta)))ds+\int_{0}^{t}\text{tr}(R(s))ds)\right) }\nonumber \\ 
&=&\sqrt{-\frac{n-1}{t}\left(\text{tr}((U^{+})(\phi^{t}_{_{M}}(\theta)))-\text{tr}((U^{+})(\theta))\right)-\frac{(n-1)^{2}}{t}\int_{0}^{t}\text{Ric}(\phi^{s}_{_{M}}(\theta)))ds} \nonumber \\ 
&\leq & \sqrt{-\frac{n-1}{t}\left(\text{tr}((U^{+})(\phi^{t}_{_{M}}(\theta)))-\text{tr}((U^{+})(\theta))\right)+(n-1)^{2}c^2}.
\end{eqnarray}
Hence, since $||U^{+}(\phi^{t}_{_{M}}(\theta))||\leq c $, we have 
$\ds \lim_{t\to +\infty}-\frac{n-1}{t}\left(\text{tr}((U^{+})(\phi^{t}_{_{M}}(\theta)))-\text{tr}((U^{+})(\theta))\right)=0$. Thus, taking limit as
$t\to +\infty$ in the inequality  (\ref{eq6 R. H. D.}), we get   
\begin{equation}\label{eq1 LRHD}
\ds \lim_{t\to +\infty}\frac{1}{t}\int_{0}^{t}\text{Ric}(\phi^{s}_{_{M}}(\theta)))ds=-c^2.
\end{equation}
To conclude our argument, let us use the Birkhoff Ergodic theorem. First, we note that as $K\geq -c^2$,  then  the negative part of the Ricci curvature
is integrable on $SM$. Thus, from a result of  Guimar\~aes in  \cite{flo:gui}, we have that the Ricci curvature is integrable on $SM$.  Therefore, as $M$ has finite
volume, from Birkhoff ergodic theorem and equation (\ref{eq1 LRHD}) we have    
$$\int_{SM}\text{Ric}(\theta)d\mu=-c^2.$$
Since the sectional curvature satisfies $K\geq -c^{2}$, then the previous equation implies that $\rm{Ric}(\theta)\equiv -c^2$. Thus, we conclude
that $$K\equiv-c^2.$$
\end{proof}
As an immediate consequence of Lemma \ref{L. R. H. D.} we get the following corollary, which proves that  Conjecture 2 implies Conjecture 1.
\begin{corollary}\label{Conj2-Conj1}
Let $M$ be a complete Riemannian manifold with finite volume, sectional curvature bounded below, and Anosov geodesic flow. If the unstable Lyapunov exponents of all periodic orbits are constants equal to $c_{{\inf}}$, then $M$ has constant negative sectional curvature equal to $-c_{_{\inf}}^2$.
\end{corollary} 

\section{Rigidity of Conjugacy}\label{proof of T-R-C}
The main purpose of this section is to prove Theorem \ref{R-conjugation}, Theorem \ref{R-conjugation-WCP}, and its corollaries. The strategy of the proof of Theorem \ref{R-conjugation} is to show that, if $\lambda_N$  is a constant of contraction of $\phi^t_N$ and $\lambda_M$ is a constant of contraction of $\phi^t_{M}$, then ${\lambda}_{_M}={\lambda}_{_N}$ and apply  the Theorem \ref{curvature} to conclude our result. For
this sake, we need the following lemma. 
\begin{lemma}\label{derivative distance}
Let $(S,g)$ be a Riemannian manifold and $d$ its Riemannian distance. If $\alpha(t)$ is a curve on $S$, differentiable at $t=0$, then 
\begin{equation*}\label{equation derivative distance}
\ds \lim_{t\to 0^+}\frac{d(\alpha(t),\alpha(0))}{t}=||\alpha'(0)||_{g}
\end{equation*}
\end{lemma}
\begin{proof}
The exponential map $\text{exp}_{\alpha(0)}$ defines a diffeomorphism in a neighborhood of $0\in $ ${T_{\alpha(0)}S}$ on a neighborhood of $\alpha(0)$
with $D\left(\text{exp}_{\alpha(0)}\right)_{0}=\text{Id}$. Then, for a small $t$  we have that 
$$d(\alpha(t), \alpha(0)))=\Big\Vert\text{exp}^{-1}_{\alpha(0)}(\alpha(t))\Big\Vert_{g}.$$
Thus, 
\begin{eqnarray}\label{e4444}
\lim_{{t}\to 0^+}\dfrac{d(\alpha(t), \alpha(0))}{{t}}&=&\Big\Vert\frac{d}{dt}\Big|_{t=0}\text{exp}^{-1}_{\alpha(0)}\alpha(t)\Big\Vert_{g} \
=\Big \Vert\underbrace{D\left(\text{exp}_{\alpha(0)}^{-1}\right)_{\alpha(0)}}_{\text{Id}}(\alpha'(0))\Big \Vert_{g} \nonumber 
= \Big \Vert \alpha'(0)\Big \Vert_{g}.
\end{eqnarray}
\end{proof}

\begin{proof}[\textbf{Proof of Theorem \ref{R-conjugation}}]
We note that as $M$ has finite volume and $\phi^t_{_{M}}$ is Anosov, then  Corollary \ref{NonNegCur} implies that $\inf K_{M}<0$, thus $\sup K_N<0$ and $\phi^t_{_{N}}$ is also Anosov.  We put $-a^2:=\sup K_N$.\\
Now assume that $f:SM \to SN$ is a $1$-equivalence between $\phi^t_{_{M}}$ and $\phi^t_{_{N}}$ with constant $C_1$ and $C_2$ (see (\ref{e1111})).
Without loss of generality, we denote by $d$ the metric of $SM$ and $SN$.
Denote by $\Gamma$ the set of points in $SM$ where there exists $df_{\theta}$, then as $f$ is a Lipschitz map, the set $\Gamma$ has full \emph{Liouville} measure.  Consider $\theta\in \Gamma$, then as $f$ is bi-Lipschitz with the help of Lemma \ref{derivative distance} we have 
$$C_1||v||\leq ||df_{\theta}(v)||\leq C_2||v||, \,\,\text{for all}\,\, v\in T_{\theta}SM.$$
Consequently, $df_{\theta}$ is an injective linear map. So we can define the subspace $F^{s(u)}_{f(\theta)}$ satisfying the equation 
$$F^{s(u)}_{f(\theta)}:=df_{\theta}(E^{s(u)}_{\theta}),$$
where $E^{s(u)}$ are the stable and unstable subbundle from the definition of Anosov flow of $\phi^t_{_{M}}$.
It is easy to see that $\Gamma$ is $\phi^t_{M}$-invariant and since
$$df_{\phi_{M}^{t}(\theta)}=d(\phi^t_{N})_{f(\theta)}\circ df_{\theta}\circ d(\phi^t_{M})_{\theta}^{-1},$$
 the subspaces $F^{s(u)}_{f(\theta)}$ are invariant by $d\phi^t_{N}$.
\\
\ \\
\indent \textbf{Claim:} For all $\theta\in \Gamma$ holds that $$F_{f(\theta)}^{s(u)}\subset E^{s(u)}_{f(\theta)},$$
where $E^{s(u)}_{f(\theta)}$ from definition of Anosov flow of $\phi^t_{_{N}}$.
\noindent  \begin{proof}[\textbf{Proof of Claim}]
 Since $f$ is a $1$-equivalence, then 
 \begin{equation}\label{eq-C01*}
C_1\cdot d(\phi^t_{M}(x),\phi^t_{M}(y))\leq d(\phi^{t}_{N}(f(x)),\phi^t_{N}(f(y)))\leq C_2\cdot d(\phi^t_{M}(x),\phi^t_{M}(y)),
\end{equation} 
for all $t\in \mathbb{R}$ and all $x,y\in SM$.\\
Consider $\theta\in \Gamma$ and 
   $\xi\in E^s_{\theta}\setminus \{0\}$   and  $\beta(r)\subset SM$ a $C^1$-curve, such that $\beta(0)=\theta$ and $\beta'(0)=\xi$. From 
Lemma \ref{derivative distance}, for $t\in \mathbb{R}$ we have 
$$\lim_{h\to 0}\frac{d(\phi^{t}_{M}(\beta(h), \phi^{t}_{M}(\beta(0)))}{h}=||d(\phi^t_{M})_{\theta}(\xi)||$$
and 
$$\lim_{h\to 0}\frac{d(\phi^{t}_{N}(f(\beta(h)), \phi^{t}_{N}(f(\beta(0))))}{h}=||d(\phi^t_{N})_{f(\theta)}(df_{\theta}\xi)||.$$
The last two equalities and inequality (\ref{eq-C01*})  implies that 
\begin{equation*}
C_1\cdot ||d(\phi^t_{M})_{\theta}(\xi)||\leq ||d(\phi^t_{N})_{f(\theta)}(df_{\theta}(\xi))||\leq C_2\cdot ||d(\phi^t_{M})_{\theta}(\xi)|| 
\end{equation*}
or 
\begin{equation}\label{eq3-C00001*}
C_1\cdot \frac{||d(\phi^t_{M})_{\theta}(\xi)||}{||\xi||}\leq \frac{||d(\phi^t_{N})_{f(\theta)}(df_{\theta}(\xi))||}{||df_{\theta}(\xi)||}\cdot \frac{||df_{\theta}(\xi)||}{||\xi||}\leq C_2\cdot \frac{||d(\phi^t_{M})_{\theta}(\xi)||}{||\xi||}.
\end{equation}
Consequently, for all $t\in \mathbb{R}$

\begin{equation}\label{eq3-CO1*}
\frac{C_1}{C_2}\cdot ||d\phi^t_{M}|_{E^{s}_{\theta}}||\leq ||d\phi^t_{N}|_{F^{s}_{f(\theta)}}||\leq \frac{C_2}{C_1}\cdot ||d\phi^t_{M}|_{E^{s}_{\theta}}||.
\end{equation}
\noindent Similar inequality holds for the stable space $F^s_{F(\theta)}$.\\
Now we notice that
$$E^{s}_{f(\theta)}=\{\xi\in T_{\theta}SM: \lim_{t\to\infty}||d\phi^t_{N}(\xi) ||=0\}.$$
Thus, the last characterization and (\ref{eq3-CO1*}) allows us to complete the proof of claim in the stable case. The proof of the unstable case is completely analogous.
\end{proof}
Since a constant of contraction of $\phi^{t}_{_{N}}$ is   $e^{-{\sqrt{-\sup K_N}}}=e^{-a}$ (cf. \cite{handbook:handbook}), then  the inequality (\ref{eq3-CO1*}) and Claim provide

$$||d\phi^t_{M}|_{E^{s}_{\theta}}||\leq \frac{C_2}{C_1} e^{-a t} \ \ \text{and} \ \ ||d\phi^{-t}_{M}|_{E^{u}_{\theta}}||\leq \frac{C_2}{C_1} e^{-at },$$
for all $t\geq 0$ and all $\theta\in \Gamma$. However, since $\Gamma$ has full measure, then the last inequality holds for all $\theta\in SM$.
In other words, $\lambda_M=e^{-a}$. However, by hypotheses $K_M\geq \inf K_M\geq \sup K_M = -a^2$, then Theorem \ref{curvature} implies that $K_M\equiv -a^2$, thus we conclude the proof of the theorem.
\end{proof}

\begin{proof}[\textbf{Proof of Corollary \ref{C0}}]The strategy here is to show that $C^1$-equivalences that are injective immersions are actually  $1$-equivalences, at least for the compact case.
Let $f\colon SM\to SN$ be a $C^1$ equivalence, which is a injective immersion. Then $f\colon SM \to f(SM)$ is a diffeomorphism and $f(SM)$ is a compact submanifold of $SN$. 
If we consider $f(SM)$ with the extrinsic distance of $SN$ and let $d_{f}$ be  such distance, it is easy to see that for all $\theta, \tilde{\theta}\in SM$
\begin{equation}\label{EQ1NEWNEW}
d(\theta, \tilde{\theta})\leq \sup_{\theta\in SM}||Df^{-1}_{f(\theta)}||\cdot d_{f}(f(\theta), f(\tilde{\theta})) \,\,\, \,\text{and}\,\,\,\, d(f(\theta),f(\tilde{\theta}))\leq \sup_{\theta\in SM}|Df_{\theta}|\cdot d(\theta, \tilde{\theta}).
\end{equation}
From (\ref{EQ1NEWNEW}) and Corollary \ref{COMP DISTANCE}, there is $\Gamma>1$ such that 
$$\dfrac{1}{\Gamma\cdot \displaystyle\sup_{\theta\in SM}||Df^{-1}_{f(\theta)}||}d(\theta, \tilde{\theta})\leq d(f(\theta), f(\tilde{\theta}))\leq \sup_{\theta\in SM}|Df_{\theta}|\cdot d(\theta, \tilde{\theta}).$$

Thus,  $f$ is a $1$-equivalence. Thus, Theorem \ref{R-conjugation} allows us to conclude that
$K_{M}\equiv \sup K_{N}$ as we wish.  
\end{proof}

\begin{proof}[\textbf{Proof of Theorem \ref{R-conjugation-WCP}}]
Observe that since  $M$ has no conjugate points, then $\inf K_M\leq 0$ (cf. \cite{gre:gre} and \cite{hopf:hopf}). Therefore the hypotheses on curvatures  imply that $\sup K_{N}\leq \inf K_{M}\leq 0$.  To prove the theorem, we consider two cases:
\begin{enumerate}
\item[\textbf{Case 1:}] $\sup K_{N}<0$;
\item[\textbf{Case 2:}] $\sup K_{N}=0$.
\end{enumerate}
The proof in each case is slightly different because in case 1 the behavior of the geodesic flow is hyperbolic and case 2 is not necessarily hyperbolic. \\
\ \\
\textbf{Case 1:} In this case, the proof is quite similar to the proof Theorem \ref{R-conjugation}, but we will use the hyperbolic splitting of $\phi^t_{_{N}}$ to construct a hyperbolic splitting for $\phi^t_{_{M}}$. Put $-a^2:=\sup K_{N}<0$, then  $\phi^t_{_{N}}$ is Anosov and denoted by $E^{s(u)}$ the stable and unstable bundle, respectively.
Let $f$ be a $1$-conjugacy and denote by $\Gamma$ the set of points in $SM$ where there exists $df_{\theta}$, then as $f$ is a Lipschitz map, the set $\Gamma$ has full \emph{Liouville} measure.  Consider $\theta\in \Gamma$ and note that $df_{\theta}$ is an isomorphism (\text{dim}\,$M$=\text{dim}\,$N$) and define the subspace $F^{s(u)}_{\theta}$ satisfying the equation 
$$df_{\theta}(F^{s(u)}_{\theta})=E^{s(u)}_{f(\theta)},$$
which satisfies 
\begin{equation}\label{eq-C001}
T_{\phi_{M}^{t}(\theta)}(SM) = F^s_{\phi_{M}^{t}(\theta)} \oplus \langle \phi_{M} \rangle \oplus F^u_{\phi_{M}^{t}(\theta)},
\end{equation}
for all $t\in \mathbb{R}$.\\
Similar arguments to the proof of Theorem \ref{R-conjugation}, we have that $\Gamma$ is invariant, the sub-bundles $F^{s(u)}$ are also invariant by $d\phi^t_{N}$ and satisfies
\begin{equation}\label{eq3-CO1}
\frac{C_1}{C_2}\cdot ||d\phi^t_{M}|_{F^{s(u)}_{\theta}}||\leq ||d\phi^t_{N}|_{E^{s(u)}_{f(\theta)}}||\leq \frac{C_2}{C_1}\cdot ||d\phi^t_{M}|_{F^{s(u)}_{\theta}}||,
\end{equation}
for all $t\in \mathbb{R}$.


Since the constant of contraction of $\phi^{t}_{_{N}}$ is $e^{-{\sqrt{-\sup K_N}}}=e^{-a}$ (cf. \cite{handbook:handbook}), then the inequality (\ref{eq3-CO1}) provides that for all $t\geq 0$

$$||d\phi^t_{M}|_{F^{s}_{\theta}}||\leq \frac{C_2}{C_1} e^{-a t} \ \ \text{and} \ \ ||d\phi^{-t}_{M}|_{F^{u}_{\theta}}||\leq \frac{C_2}{C_1} e^{-at }.$$
The two last inequalities together with the equation (\ref{eq-C001}) provide a hyperbolic behavior of $\phi_{_{M}}^{t}$ along $\phi^t_{_{M}}(\theta)$. Therefore, as $M$ has no conjugate points and $K_M\geq \sup K_{N}=-a^2$, then the same arguments of proof of Theorem \ref{curvature} provide that 
\begin{equation}\label{eq2-C001}
\ds \lim_{t\to +\infty}\frac{1}{t}\int_{0}^{t}\text{Ric}(\phi^{s}_{_{M}}(\theta)))ds=-a^2
\end{equation}
for all $\theta \in \Gamma$, which is  a set of full \emph{Liouville} measure. 
Then, the Birkhoff ergodic theorem and equation (\ref{eq2-C001}) give us   
$$\int_{SM}\text{Ric} \, d\mu=-a^2,$$
where $\rm{Ric}$ is the Ricci curvature.
Since the sectional curvature satisfies $K_{M}\geq -a^2$, then the previous equation implies that $\text{Ric}(\theta)\equiv -a^2$ for \emph{Liouville} almost every point $\theta\in SM$. Thus, we conclude
that $K_M\equiv-a^2$ and therefore the splitting, given by equation (\ref{eq-C001}), coincides with its hyperbolic splitting. 
To conclude the proof of case 1, we need only to prove that $K_N\equiv -a^2$. For this sake, note that since  $K_M\equiv -a^2 $ and for $\xi\in F^{u}_{\theta}$ (cf. \cite{handbook:handbook}) we have that 
\begin{equation*}
||(d\phi^t_{M})_{\theta}(\xi)||=\sqrt{1+a^2}\cdot e^{at}||\pi_{1}(\xi)||,
\end{equation*}
where $\pi_{1}(\cdot)$ is the projection on the first coordinate in the horizontal and vertical decomposition of $TSM$ (see Section \ref{GeoFlow}).\\
Thus, 
$$\displaystyle\lim_{t\to +\infty}\frac{1}{t}\log ||(d\phi^t_{M})_{\theta}(\xi)||=a$$
uniformly in bounded regions of pair $(\theta, \xi)$ with $\theta\in SM$ and $\xi\in F^u_{\theta}$. \\
Put $\Lambda=f(\Gamma)$, as $f$ is a Lipschitz map and $\Gamma$ has full \emph{Liouville} measure, then $\Lambda$ has full \emph{Liouville} measure.
As (\ref{eq3-C00001*}) is valid, we have 
\begin{equation}\label{eq2'-00001}
\lim_{t\to +\infty}\frac{1}{t}\log ||(d\phi^t_{N})_{w}(\eta)||=a,
\end{equation}
uniformly in bounded regions of pair $(w, \eta)$ with $w\in\Lambda$ and $\eta\in E^u_{w}$.\\

\indent \textbf{Claim:} For all $w\in SN$ and $\eta\in E^u_{w}$ hold that
\begin{equation*}
\lim_{t\to +\infty}\frac{1}{t}\log ||(d\phi^t_{N})_{w}(\eta)||=a.
\end{equation*}
\noindent \begin{proof}[\textbf{Proof of Claim}]
Let $w\in SN$ and $\eta\in E^u_{w}$, then by the density of $\Lambda$ and continuity of unstable bundle $E^u$, we have that there are $w_n\in \Lambda$ and $\eta_{n}\in E^u_{w_{_n}}$ such that $(w_n,\eta_n)$ converges to $(w,\eta)$.
Note that for all $t\in \mathbb{R}$ 
\begin{equation}\label{eq2''-00001}
\Big\vert a - \frac{1}{t}\log ||(d\phi^t_{N})_{w}(\eta)|| \Big\vert\leq \Big\vert a - \frac{1}{t}\log ||(d\phi^t_{N})_{w_n}(\eta_n)\Big\vert + \Big\vert \frac{1}{t}\log \frac{||(d\phi^t_{N})_{w_n}(\eta_n)||}{||(d\phi^t_{N})_{w}(\eta)||}\Big\vert.
\end{equation}
By uniformity in the convergence of inequality (\ref{eq2'-00001}), given $\epsilon>0$ there is $t_0$ such that
$\Big\vert a - \frac{1}{t}\log ||(d\phi^t_{N})_{w_n}(\eta_n)\Big\vert<\frac{\epsilon}{2}$ for each $t\geq t_0$  and  all $n$. Moreover, by continuity of $d\phi^t_{N}$, for each $t\geq t_0$ there is $n(t)$ such that $\Big\vert \frac{1}{t}\log \frac{||(d\phi^t_{N})_{w_n}(\eta_n)||}{||(d\phi^t_{N})_{w}(\eta)||}\Big\vert<\frac{\epsilon}{2}$, for each $n\geq n(t)$. Thus 
$$\Big\vert a - \frac{1}{t}\log ||(d\phi^t_{N})_{w}(\eta)|| \Big\vert<\epsilon \ \ \ \text{for each} \ \ t\geq t_0.$$
\end{proof}
\noindent This claim implies, similarly to Corollary \ref{C. R. H. D.},
\begin{equation*}
\lim_{t\rightarrow +\infty}\dfrac{1}{t}\log\left|   \left.\det d_{w}\phi^{t}_{N}\right|_{E^{u}_{w}}\right| =a(n-1) .
\end{equation*}
Let $w\in SN$ be a periodic point of $\phi_{N}^{t}$ of period $\tau$ and $r_{1},\ldots, r_{n-1}$ the set of eigenvalues of $d_{w}\phi^{\tau}_{N}:E^{u}_{w} \rightarrow E^{u}_{w}$ counted with multiplicity. We denote by
\begin{center}
$\displaystyle \rho(w)=\max_{i}\left\lbrace \vert r_{i}\vert\right\rbrace $
\end{center}
the spectral radius. By Gelfand's formula (see \cite{Gelfand}) we have 
 \begin{equation*}
 \lim_{k\rightarrow + \infty}\left. \Vert d_{w}\phi^{\tau k}_{_{N}}\right|_{E^{u}_{w}}\Vert^{\frac{1}{k}}=\rho(w) .
 \end{equation*}
 Thus,
 \begin{equation}\label{tau1}
 \tau a=\lim_{k\rightarrow + \infty}\dfrac{\tau}{\tau k}\log \left. \Vert d_{w}\phi^{\tau k}_{N}\right|_{E^{u}_{w}}\Vert= \log \rho(w).
 \end{equation}
On the other hand,
\begin{equation*}
\dfrac{1}{k}\log \left| \left. \det d_{w}\phi^{\tau k}_{N}\right|_{E^{u}_{w}}\right|  = \sum_{i=1}^{n-1}\log \vert r_{i} \vert .
\end{equation*}
Therefore,
\begin{equation}\label{tau2}
\tau a(n-1)=\lim_{k\rightarrow + \infty}\dfrac{\tau}{\tau k}\log \left| \left. \det d_{w}\phi^{\tau k}_{N}\right|_{E^{u}_{w}} \right|  = \sum_{i=1}^{n-1}\log \vert r_{i} \vert
\end{equation}
From \eqref{tau1} and \eqref{tau2} we conclude that $\log\vert r_{i} \vert=\tau a$ for $1 \le i \le n-1$. So, $\vert r_{i}\vert = \vert r_{j} \vert $ for  $1\le i,j \le n-1$. From Theorem 1.1 at  \cite{butler:butler} we have that  $K_N\equiv -a^2$, which completes the proof of case 1.

\ \\
\textbf{Case 2:} In this case $\sup K_{N}=0$. So, our main goal is to prove that $K_M=K_N=\sup K_N=0$. Since $\sup K_{N}=0$, then  $0\leq \sup K_{N}\leq \inf K_{M}$, in other words, $0\leq K_M$. By hypothesis $M$ has no conjugate points, then $K_{M}\equiv 0$ (cf. \cite{gre:gre} and \cite{hopf:hopf}). So, our goal has been reduced to prove that  $K_{N}\equiv 0$. By contradiction, assume that $N$ is not flat and assume that there is $c>0$ such that $-c^2\leq K_{N}\leq 0$. We have the following claim:\\

\noindent\textbf{Claim:} There is $w\in SN$ such that $$\displaystyle\limsup_{t\to +\infty}\frac{1}{t}\int_{0}^{t}{\rm{Ric}}(\phi^{s}(w)))ds=-B<0,$$
for some $B>0$.
\begin{proof}[\textbf{Proof of Claim}] From Birkhoff Ergodic Theorem, for almost every point $w\in SM$ we have that 

$$\lim_{t\to +\infty}\frac{1}{t}\int_{0}^{t}{\rm{Ric}}(\phi^{s}(w)))ds:=\psi(w),$$
where $\psi(w)$ is a integrable function and 
\begin{equation}\label{EQN1}
\int_{SM}\text{Ric}(\theta)d\mu= \int_{SM}\psi(\theta)d\mu.
\end{equation}
Now, assume by contradiction that for all $w\in SM$, 
$$\displaystyle\limsup_{t\to +\infty}\frac{1}{t}\int_{0}^{t}{\rm{Ric}}(\phi^{s}(w))ds=0.$$
Thus, $\psi(w)=0$ for almost every $\omega\in SM$ and from (\ref{EQN1}) 

$$\int_{SM}\text{Ric}(\theta)d\mu=0.$$
Finally, as $K_N\leq 0$, then $\text{Ric}(\theta)= 0$ for all $\theta\in SM$ and by definition of $\text{Ric}$ we conclude that $K_N=0$ and $N$ is flat, so we have a contradiction and the claim is proven.
\end{proof}

Using the last claim, since $N$ is a compact manifold with non-positive curvature, in particular, $N$ has no focal points, then with similar arguments of Proposition 1 and Corollary 7 at \cite{MelRom:pre2} we have that the unstable Green  subbundle  $G^{u}$ of $TSN$ (see Section \ref{NCP} and \cite{eb:eb} for more details) satisfies 
\begin{equation*}
\limsup_{t\to +\infty} \frac{1}{t}\log|\text{det} \, \, d\phi_{_{N}}^{t}|_{G^u_w}|\geq \frac{B}{c}.
\end{equation*}
As $|\text{det} \, \, d\phi_{_{N}}^{t}|_{G^u_w}|\leq ||d\phi_{_{N}}^{t}|_{G^u_w}||^{\text{dim}{G^u_{w}}}$ and  $\text{dim}\, {G^u_{w}}=\text{dim}\,{N}-1=n-1$, then the last inequality provides that 
\begin{equation}\label{final eq}
\limsup_{t\to +\infty} \frac{1}{t}\log ||d\phi_{N}^{t}|_{E^u_w}||\geq \frac{B}{(n-1)c}.
\end{equation}
To finish our arguments, observe that as $K_M\equiv 0$, then   
$$\displaystyle\lim_{t\to +\infty}\frac{1}{t}\log ||(d\phi^t_{M})_{\theta}(\xi)||=0$$
uniformly in bounded regions of pair $(\theta, \xi)$ for each  $\theta \in SM$ and $\xi\in T_{\theta}SM$.
Proceeding in the same way as in Case 1, we have 
\begin{equation*}\label{eq22'-00001}
\lim_{t\to +\infty}\frac{1}{t}\log ||(d\phi^t_{N})_{w}(\eta)||=0,
\end{equation*}
for each pair $(w, \eta)$, with $w\in SN$ and $\eta \in T_{w}SN$. Thus, the last equation is a contradiction with the inequality (\ref{final eq}), which ends the proof of Case 2 and, consequently, the proof of Theorem.
\end{proof}

\appendix
\section{\\ Comparing distances between a manifold and its submanifolds}
This appendix is devoted to prove the Lemma \ref{comparation of distances}. This a general lemma that can be in other context. 

\begin{lemma}\label{comparation of distances} 
	Let $Q$ be a complete Riemannian manifold and $P$ a compact submanifold of $Q$. Consider $d$ the natural distance in $Q$ and $d_P$ the extrinsic distance in $P$. Then there is $\delta>0$ such that for each $x\in P$
	\begin{equation*}
	d_{P}(x, y)\leq \frac{3}{2}d(x,y) \, \, \, \text{for all} \, \, \, y\in B^{P}_{\delta}(x),
	\end{equation*} 
	where  $B^{P}_{\delta}(x)$ is the ball of center $x$ and radius $\epsilon$ in $P$.
\end{lemma}
\begin{proof}
Since $P$ is a compact submanifold of $Q$, then the injectivity radius $r_{P}$ of $P$ with extrinsic metric is a positive number. Denote by $SP$ the unitary bundle of $P$ and consider the non-negative real function $\mathcal{H}\colon SP\times [0,r_p] \to \mathbb{R}$ defined by 
\[
\mathcal{H}((x,v),t) = 
  \begin{cases}
      \dfrac{d_{P}(\text{exp}^P_{x}\,tv, x)}{d(\text{exp}^P_{x}\, tv, x)}, & t\neq 0 \\
      \, \\
      
     \,\,\,\,\,\,\,\,\,\,\,\,\,\,\,\ 1, & t=0,
  \end{cases}
\]
where $\text{exp}^P_{x}$ denotes the exponential map of $P$.\\
It is easy to see that $\mathcal{H}$ is continuous for all $((x,v),t)$ with $t\neq 0$. We state that $\mathcal{H}$ is continuous at any point $((x,v),0$). To prove that, we used the compactness of $P$ and the following claim:

\textbf{Claim:} For each $x\in P$ the function $\mathcal{H}_{x}\colon T^{1}_{x}P\times [0,r_P] \to \mathbb{R}$ defined by $\mathcal{H}_{x}(v,t):=\mathcal{H}((x,v),t)$ is uniformly continuous function in $T^{1}_{x}P\times [0,r_P]$, where $T^{1}_{x}P$ is the set of unitary tangent vectors of $P$ at $x$.
\begin{proof}[\textbf{Proof of Claim}]
By definition of $\mathcal{H}_{x}$ and compactness of $T^{1}_{x}P\times [0,r_P]$, we only need to prove that  $\ds\lim_{n\to +\infty }\mathcal{H}_{x}(v_n,t_n)=\mathcal{H}_{x}(v,0)=1$, for any sequence $(v_n, t_n)\in T^{1}_{x}P\times [0,r_P]$, which converges to $(v,0)$. In fact: first note that if $(x,v)\in SP$ and  $\alpha(t)=\text{exp}^{P}_{x}\,tv$, then from Lemma \ref{derivative distance} we have that 
		$$\ds \lim_{t\to 0^+}\mathcal{H}((x,v),t))=\ds\lim_{t\to 0^+}\dfrac{\dfrac{d_P(\text{exp}^{P}_{x}\,tv, x)}{t}}{\dfrac{d(\text{exp}^{P}_{x}\,tv, x)}{t}}=\dfrac{\ds\lim_{t\to 0^+}\dfrac{d_{P}(\alpha(t),\alpha(0))}{t}}{\ds\lim_{t\to 0^+}\dfrac{d(\alpha(t),\alpha(0))}{t}}=\frac{||\alpha'(0)||_{P}}{||\alpha'(0)||}=1,$$
since $||\alpha'(0)||_{P}$ is the norm with restrict metric of $P\subset Q$ which is equal to $||\alpha'(0)||$. \\
As $\mathcal{H}_{x}(v_n,0)=1$ then we can assume, without loss of generality, that $t_n\neq 0$ for all $n$.\\
For $n$ large enough, consider the family of $C^1$-curves $\gamma_{n}(t) = \text{exp}^{P}_{x} (t_nv + t t_n(v_n - v))$, $t\in[0,1]$, and note that
			$$ 	d(\text{exp}^{P}_{x}\,t_nv, \text{exp}^{P}_{x}\,t_nv_n)  \leq  d_{P}(\text{exp}^{P}_{x}\,t_nv, \text{exp}^{P}_{x}\,t_nv_n)   \leq \displaystyle\int_{0}^{1} ||\gamma'_{n}(s)||_{P} \, ds  \leq Kt_n||v_n-v||,$$
where $K:=\displaystyle\sup_{\{v\in T_{x}P: ||v||\leq 2r_P\}} || D (\text{exp}^{P}_{x})_{v}||$.
			Hence, since $\ds \lim_{n \to +\infty}\mathcal{H}_{x}(v,t_n)=1$, then 
			 $$d(\text{exp}^{P}_{x}\,t_nv_n, x) \geq d(\text{exp}^{P}_{x}\,t_nv, x) - Kt_n||v_n-v||  = t_n\left(\mathcal{H}_{x}(v,t_n))^{-1}-K||v_n-v||\right) >0.$$  In particular, we have
			$$ 1 \leq \mathcal{H}_{x}(v_n,t_n)) = \dfrac{t_n}{d(\text{exp}^{P}_{x}\, t_nv_n, x)} \leq \dfrac{1}{(\mathcal{H}_{x}(v,t_n)))^{-1} - K||v_n-v||}.$$		
 Therefore
		$\ds \lim_{ n \to + \infty}\mathcal{H}_{x}(v_n,t_n) = 1$ as desired.
\end{proof}
The last claim and compactness of $P$ allow us to conclude the continuity of $\mathcal{H}$ in every point $((x,v),0)$ and consequently the uniformly continuity in $SP\times[0,r_P]$.

To conclude the proof of the lemma, from the uniformly continuity of $\mathcal{H}$,  given $\epsilon=\frac{1}{2}$ there is $\delta$ such that 

$$|t|=d(((x,v),t), ((x,v),0))<\delta\,\, \text{implies} \, \, |\mathcal{H}((x,v),t)-1|< \frac{1}{2}.$$
The last inequality implies that  
\begin{equation}\label{EE2}
|t|< \delta \, \, \, \text{implies}\, \, \, d^s(\text{exp}^s_{x}\,tv, x) \leq \frac{3}{2}d(\text{exp}_{x}\, tv, x).
\end{equation}
Consequently,  if  $\tilde{x}\in B^P_{\delta}(x)$, then there is $\tilde{t}$ with 
$|\tilde{t}|<\delta$, ${v}\in T^1_{x}P$ such that  $\text{exp}^P_{x}\, \tilde{t}{v}=\tilde{x}$. Thus,
(\ref{EE2}) provides the result of the lemma.
\end{proof}
\begin{corollary}\label{COMP DISTANCE}
In the same condition of the \emph{Lemma \ref{comparation of distances}}, there is a constant $\Gamma>1$ such that 
\begin{equation*}
	d_{P}(x, y)\leq \Gamma\cdot d(x,y) \, \, \, \text{for all} \, \, \, x,y\in P.
\end{equation*} 
\end{corollary}
\begin{proof}
For each $x\in P$ consider the function $\Gamma(x)=\displaystyle\sup_{y\neq x}\dfrac{d_{P}(x, y)}{d(x, y)}$. We state that there is $\Gamma>1$ such that $\Gamma(x)\leq \Gamma$ for all $x\in P$. In fact, by contradiction assume that for each $n\in \mathbb{N}$  there is $x_n$ such that $\Gamma_{n}\geq n $. By definition of $\Gamma(x_n)$ there is $y_n\neq x_n$ with $\dfrac{d_{P}(x_n, y_n)}{d(x_n, y_n)}\geq n-1$. Since $P$ is a compact submanifold, then we can assume that $x_n$ converges to $x$ and $y_n$ converges to $y$. From the last inequality, we have that $x=y$. Therefore, from Lemma \ref{comparation of distances}, for n large enough $y_n\in B_{\delta}(x_n)$ and 
$$n-1\leq \dfrac{d_{P}(x_n, y_n)}{d(x_n, y_n)}\leq \frac{3}{2},$$
which provides a contradiction.
\end{proof}

\textbf{Acknowledgments:} The authors would like to thank  Fran\c cois Ledrappier for his useful comments during the preparation of this work and the anonymous referees for the great suggestions that improved the paper. \'Italo Melo was partially supported by FAPEPI (Brazil) and CNPq
(Brazil) and Sergio Romaña was supported by CNPq and Faperj - Bolsa Jovem Cientista do Nosso Estado No. E-26/201.432/2022.
\bibliography{rigiditytheorem}
\bibliographystyle{ijmart}

\end{document}